\documentclass[11pt,twoside,a4paper]{article}
\usepackage{makeidx}
\usepackage[dvips]{graphicx}
\usepackage{amscd}
\usepackage{amsmath}
\usepackage{amssymb}
\usepackage{stmaryrd}
\usepackage{latexsym}
\usepackage[latin1]{inputenc}
\usepackage[T1]{fontenc}   
\usepackage[english]{babel}
\setlength{\textwidth}{16cm}
\setlength{\textheight}{25cm}
\topmargin = -25mm
\oddsidemargin = -1mm
\evensidemargin = 0mm

\newcommand{\tun}{\begin{picture}(5,0)(-2,-1)
\put(0,0){\circle*{2}}
\end{picture}}

\newcommand{\tdeux}{\begin{picture}(7,7)(0,-1)
\put(3,0){\circle*{2}}
\put(3,0){\line(0,1){5}}
\put(3,5){\circle*{2}}
\end{picture}}

\newcommand{\ttroisun}{\begin{picture}(15,8)(-5,-1)
\put(3,0){\circle*{2}}
\put(-0.65,0){$\vee$}
\put(6,7){\circle*{2}}
\put(0,7){\circle*{2}}
\end{picture}}
\newcommand{\ttroisdeux}{\begin{picture}(5,12)(-2,-1)
\put(0,0){\circle*{2}}
\put(0,0){\line(0,1){5}}
\put(0,5){\circle*{2}}
\put(0,5){\line(0,1){5}}
\put(0,10){\circle*{2}}
\end{picture}}

\newcommand{\tquatredeux}{\begin{picture}(15,18)(-5,-1)
\put(3,0){\circle*{2}}
\put(-0.65,0){$\vee$}
\put(6,7){\circle*{2}}
\put(0,7){\circle*{2}}
\put(0,14){\circle*{2}}
\put(0,7){\line(0,1){7}}
\end{picture}}


\newcommand{\tdun}[1]{\begin{picture}(10,5)(-2,-1)
\put(0,0){\circle*{2}}
\put(3,-2){\tiny #1}
\end{picture}}

\newcommand{\tddeux}[2]{\begin{picture}(12,5)(0,-1)
\put(3,0){\circle*{2}}
\put(3,0){\line(0,1){5}}
\put(3,5){\circle*{2}}
\put(6,-2){\tiny #1}
\put(6,3){\tiny #2}
\end{picture}}

\newcommand{\tdtroisun}[3]{\begin{picture}(20,12)(-5,-1)
\put(3,0){\circle*{2}}
\put(-0.65,0){$\vee$}
\put(6,7){\circle*{2}}
\put(0,7){\circle*{2}}
\put(5,-2){\tiny #1}
\put(9,5){\tiny #2}
\put(-5,5){\tiny #3}
\end{picture}}
\newcommand{\tdtroisdeux}[3]{\begin{picture}(12,12)(-2,-1)
\put(0,0){\circle*{2}}
\put(0,0){\line(0,1){5}}
\put(0,5){\circle*{2}}
\put(0,5){\line(0,1){5}}
\put(0,10){\circle*{2}}
\put(3,-2){\tiny #1}
\put(3,3){\tiny #2}
\put(3,9){\tiny #3}
\end{picture}}

\newcommand{\tdquatreun}[4]{\begin{picture}(20,12)(-5,-1)
\put(3,0){\circle*{2}}
\put(-0.6,0){$\vee$}
\put(6,7){\circle*{2}}
\put(0,7){\circle*{2}}
\put(3,7){\circle*{2}}
\put(3,0){\line(0,1){7}}
\put(5,-2){\tiny #1}
\put(8.5,5){\tiny #2}
\put(1,10){\tiny #3}
\put(-5,5){\tiny #4}
\end{picture}}
\newcommand{\tdquatredeux}[4]{\begin{picture}(20,20)(-5,-1)
\put(3,0){\circle*{2}}
\put(-.65,0){$\vee$}
\put(6,7){\circle*{2}}
\put(0,7){\circle*{2}}
\put(0,14){\circle*{2}}
\put(0,7){\line(0,1){7}}
\put(5,-2){\tiny #1}
\put(9,5){\tiny #2}
\put(-5,5){\tiny #3}
\put(-5,12){\tiny #4}
\end{picture}}


\input{xy}
\xyoption{all}

\newcommand{\BEQ}{\begin{equation}}     
\newcommand{\BEA}{\begin{eqnarray}}
\newcommand{\EEQ}{\end{equation}}       
\newcommand{\EEA}{\end{eqnarray}}
\newcommand{\eps}{\epsilon}
\newcommand{\R}{\mathbb{R}}
\newcommand{\C}{\mathbb{C}}
\newcommand{\Lea}{{\mathrm{Lea}}}
\newcommand{\Roo}{{\mathrm{Roo}}}
\newcommand{\II}{{\rm i}} 
\newcommand{\Sh}{\mathbf{Sh}}
\newcommand{\SkI}{{\mathrm{SkI}}}     

\newcommand{\eop}{\hfill $\Box$}   
\newcommand{\shuffle}{{\boxbar}}
\newcommand{\Id}{{\mathrm{Id}}}

\newcommand{\h}{\mathbf{H}}
\renewcommand{\H}{\mathbf{H}}
\newcommand{\T}{\mathbf{T}}
\newcommand{\F}{\mathbf{F}}
\renewcommand{\vec}[1]{\boldsymbol{#1}}
\newcommand{\FQSym}{\mathbf{FQSym}}

\newtheorem{defi}{\indent Definition}
\newtheorem{lemma}[defi]{\indent Lemma}

\newtheorem{prop}[defi]{\indent Proposition}

\newenvironment{proof}{{\bf Proof.}}{\hfill $\Box$}

\begin{document}

\vspace*{1.5cm}
\begin{center}
{\Large \bf Ordered forests, permutations and iterated integrals
}
\end{center}

\centerline{ {\bf Loïc Foissy}$^a$  and {\bf J\'er\'emie Unterberger}$^b$}

\vskip 0.5 cm
\centerline {$^a$Laboratoire de Math\'ematiques, Moulin de la Housse 
Universit\'e de Reims,} 
\centerline{ B.P.1039 , 
F -- 51687 Reims Cedex 2, France}
\vskip 0.5 cm
\centerline {$^b$Institut Elie Cartan,\footnote{Laboratoire 
associ\'e au CNRS UMR 7502} Universit\'e Henri Poincar\'e Nancy I,} 
\centerline{ B.P. 239, 
F -- 54506 Vand{\oe}uvre-l\`es-Nancy Cedex, France}

\vspace{2mm}
\begin{quote}

\renewcommand{\baselinestretch}{1.0}
\footnotesize
{We construct an explicit Hopf algebra isomorphism from the algebra of heap-ordered trees to that of quasi-symmetric
functions, generated by formal permutations, which is a lift of the natural projection of the Connes-Kreimer algebra of decorated rooted trees onto
the shuffle algebra. This isomorphism gives a universal way of lifting measure-indexed characters of the Connes-Kreimer algebra into
measure-indexed characters of the shuffle algebra, already introduced in \cite{Unterberger} in the framework  of rough path theory as the so-called
Fourier normal ordering algorithm.}

\end{quote}

\vspace{4mm}
\noindent
{\bf Keywords:} 
rough paths, H\"older continuity, Hopf algebra of decorated rooted trees, Hopf algebra of free quasi-symmetric functions, shuffle algebra.

\smallskip
\noindent
{\bf Mathematics Subject Classification (2000):} 05C05, 16W30, 60F05, 60G15, 60G18, 60H05 

\tableofcontents

\newpage

\section*{Introduction}

Let us consider $d$ regular functions $\Gamma_1,\ldots,\Gamma_d$ and $s,t \in \R$. To any word $a_1\ldots a_n$ in the letters $\{1,\ldots,d\}$, we associate
the iterated integral
$$I^{ts}_\Gamma(a_1\ldots a_n)=\int_s^t d\Gamma_{a_1}(x_1)\int_s^{x_1} d\Gamma_{a_2}(x_2) \ldots \int_s^{x_n-1} d\Gamma_{a_n}(x_n).$$
We shall see in the sequel words as decorated trunk trees, that is to say decorated rooted trees with no ramification. In other terms,
we shall identify the word $abc$ and the rooted tree $\tdtroisdeux{$a$}{$b$}{$c$}$, and so on.
This function extends into a function $\overline{I}^{ts}_\Gamma$ on decorated rooted trees. For example,
$$\overline{I}^{ts}_\Gamma(\tdtroisun{$a$}{$c$}{$b$})=\int_s^t d\Gamma_a(x_1) \int_s^{x_1} d\Gamma_b(x_2) \int_s^{x_1} d\Gamma_c(x_3),$$
see section 4.2 of the present text for more details.

Such an integral $\bar{I}^{ts}_\Gamma(\mathbb{T})$ can be decomposed as a sum of iterated integrals $I^{ts}_\Gamma(a_1\ldots a_n)$,
where $(a_1\ldots a_n)$ ranges in a certain set of words associated to the decorated rooted tree $\mathbb{T}$. For example:
$$I^{ts}_\Gamma(\tdtroisun{$a$}{$c$}{$b$})=I^{ts}_\Gamma(abc)+I^{ts}_\Gamma(acb).$$
This defines a linear map $\theta^d$ from the algebra $\h^d$ generated by the set of decorated rooted trees (that is to say the Connes-Kreimer Hopf algebra
of rooted trees) to the vector space $\Sh^d$ generated by words (the {\em shuffle algebra} on $d$ letters, see \cite{Abe}), 
and it turns out that this map is a Hopf algebra morphism, surjective and not injective. 

Let us consider a word $w=(\ell(1)\ldots \ell(n))$ in $\Sh^d$. A natural antecedent of $w$ by $\theta^d$ is the the trunk tree $\mathcal{T}$ with decorations 
given from the root to the leaf by $\ell(1),\ldots,\ell(n)$. However, this section of $\theta^d$ is a coalgebra morphism, but not an algebra morphism,
as trunk trees do not satisfy the shuflle relations. 
For any $\sigma \in \Sigma_n$, an antecedent $\mathcal{T}^\sigma$ of the permuted word $w^\sigma$ is constructed in \cite{Unterberger} 
with the help of Fubini's theorem, such that  $\mathcal{T}^{Id}$ is the trunk tree $\mathcal{T}$, and the shuffle relations are satisfied 
for the $\mathcal{T}^\sigma$. We would like in this article to reformulate this construction from an algebraic point of view, 
more precisely in terms of morphisms of Hopf algebras.\\

The initial motivation of this works comes from the theory of rough paths.
Assume $t\mapsto \Gamma(t)=(\Gamma_1(t),\ldots,\Gamma_d(t))$ is a smooth $d$-dimensional path, and let $V_1,\ldots,V_d:\mathbb{R}^d \longrightarrow 
\mathbb{R}^d$ be smooth vector fields. Then the classical Cauchy-Lipschitz theorem implies that the differential equation driven by $\Gamma$
\BEQ \label{eqdiff} dy(t)=\sum_{i=1}^d V_i(y(t)) d\Gamma_i(t) \EEQ
admits a unique solution with initial condition $y(0)=y_0$. The usual way to prove this is to show by a functional fixed-point theorem that iterated integrals
$$y_n \mapsto y_{n+1}(t)=y_0+\int_0^t \sum_i V_i(y_n(s)) d\Gamma_i(s)$$
converge when $n \rightarrow \infty$. 

Assume now that $\Gamma$ is only $\alpha$-Hölder, for some $\alpha \in (0,1)$. Then the Cauchy-Lipschitz theorem does not hold any more, because
one first needs to give a meaning to the above integrals, and in particular to the iterated integrals 
$\int_s^t d\Gamma_{i_1}(t_1) \ldots \int_s^{t_{n-1}}d\Gamma_{i_n}(t_n)$, for $n \geq 2$, $1\leq i_1,\ldots,i_n \leq d$.

The theory of rough paths, invented by T. Lyons \cite{Lyons} and further developed by V. Friz, N. Victoir \cite{Friz} and M. Gubinelli \cite{Gu},
implies the possibility to solve (\ref{eqdiff}) by a {\em redefinition of the integration} along $\Gamma$, using as an essential ingredient a {\em rough path}
$\mathbf{\Gamma}$ along $\Gamma$, see definition \ref{defiroughpath} of the present text. It is an 
essential tool in particular in the context of  integration with respect to stochastic processes or
of stochastic differential equations, when the driving process is {\em less} regular than the family
of Brownian motion\footnote{Recall that Brownian paths are $(\frac{1}{2}-\varepsilon)$-Hölder continuous for every $\varepsilon>0$.}.
Fractional Brownian motion with Hurst (or regularity) index $\alpha\in(0,1/2)$ is probably the most prominent example, 
and the main application of the above cited article \cite{Unterberger} so far
is indeed to the case of fractional Brownian motion with $\alpha\le 1/4$ \cite{Unterberger-fbm},
 for which more elementary
procedures (such as piecewise linear approximation \cite{CouQia} or the Malliavin calculus \cite{Nua} for instance) do not work any more.

The axioms of this definition can be reformulated in terms of Hopf algebras. Recall that if $H=(H,m,\Delta)$ is a Hopf algebra, then for any 
commutative algebra $A$, the set $Char_H(A)$ of algebra morphisms from $H$ to $A$ is a group for the convolution product $*$ induced
by the coproduct of $H$. With this formalism, a {\em formal rough path} may
 be seen as a a family $({\bf \Gamma}^{ts})_{t,s\in \mathbb{R}}$
of characters of the shuffle algebra $\Sh^d$, such that for any $s,t,u \in \mathbb{R}$,
${\bf \Gamma}^{ts}={\bf \Gamma}^{tu}*{\bf\Gamma}^{us}$. This family of characters is a {\em rough
path}  if it also satisfies some  Hölder continuity condition, see definition \ref{defiroughpath}, that we shall not detail here.

As explained earlier, the aim of this text is to give an algebraic frame to the construction of \cite{Unterberger}, in terms of Hopf algebra morphisms;
this will describe in a simple and explicit way {\em all} formal rough paths over $\Gamma$ by means of  algebraic tools. 
We use for this two families of combinatorial Hopf algebras. The first one is the Hopf algebra $\h$ introduced in \cite{Connes} 
for Renormalization in Quantum Field Theory.
It is based on (decorated or not) rooted trees; its product is given by commutative concatenation of rooted trees, giving rooted forests, 
and its coproduct by admissible cuts of trees, as recalled in section 2.2. We generalise this construction in section 3.1 to ordered rooted forests, 
that is to say rooted forests whose vertices are totally ordered. The obtained Hopf algebra $\h_o$ is neither commutative nor cocommutative.
If the total order of the vertices of the ordered forest $\mathbb{F}$ is compatible with the oriented graph structure of $\mathbb{F}$,
we shall say that $\mathbb{F}$ is {\em heap-ordered}. The set of heap-ordered forests generates a Hopf subalgebra $\h_{ho}$ of $\h_o$.
All these constructions are also generalized to decorated rooted forests.

On the other side, working with permutations instead of words,  we obtain a Hopf algebra structure on the vector space generated
by the elements of all symmetric groups $\Sigma_n$. This object, first introduced by C. Malvenuto and Ch. Reutenauer \cite{Malvenuto},
is known as the Hopf algebra of free quasi-symmetric functions $\FQSym$, because of its numerous relations with the Hopf algebra
of symmetric functions, see \cite{Duchamp}. 

We construct in section 3.1 a Hopf algebra morphism from $\h_o$ to $\FQSym$ using the notion of forest-order-preserving symmetries, 
see definition \ref{defifops}. When restricted to $\h_{ho}$, this Hopf algebra morphism $\Theta$ becomes an isomorphism. It is then natural
to consider the inverse image  of $\sigma \in \Sigma_n$ by $\Theta$: this element of $\h_{ho}$ is denoted by $\mathbb{T}^{\sigma^{-1}}$.
The product and coproduct of the elements $\mathbb{T}^\sigma$ is decribed in lemma \ref{l3}.

There exist canonical projections from a decorated version of $\FQSym$ to the shuffle algebra $\Sh^d$, and from the Hopf algebra
of heap-ordered decorated forests $\h_{ho}^d$ to the Hopf algebra of decorated rooted trees $\h^d$. Completing the last edge, we define a commutative square
of Hopf algebra morphisms:
$$\xymatrix{\h_{ho}^d \ar[r]^{\Theta^d} \ar[d]_{\pi_{ho}^d}&\FQSym^d \ar[d]^{\pi_\Sigma^d}\\
\h^d \ar[r]_{\theta^d}&\Sh^d}$$
Considering characters, the group of characters $Char_{\h^d}(A)$ is now seen as a subgroup of $Char_{\h_{ho}^d}(A)$, more precisely, as 
the subgroup of characters invariant under forest-order-preserving symmetries.
The Hopf algebra morphism $\theta^d$ induces a group injection from $Char_{\Sh^d}(A)$ to $Char_{\h^d}(A)$, sending $\phi$ to
$\phi\circ \theta^d$. Using characters given by iterated integrals, this allows to  compute easily
 the elements $\mathbb{T}^\sigma$
with the help of Fubini's theorem, see section 4.2.

The final section explains how to use this formalism to construct, first characters of the shuffle algebra from 
a character of $\h^d$, then a rough path, using the notion of {\em measure splitting}, see 
Definition \ref{defimeassplit}. In particular, constructing a {\em formal rough path} over $\Gamma$ 
is definitely a {\em very undetermined} problem, since essentially any choice of function on the
set of rooted trees yields by linear and multiplicative extension a character of ${\bf H}^d$ and
then a formal rough path. However, it is natural to look for some non-arbitrary regularization procedure
yielding a H\"older-continuous rough path.
As explained in the conclusion of the present text, the a priori difficult 
 problem of regularizing characters of $\Sh^d$ becomes in this process
the problem of regularizing characters of $\h^d$. This is a much more conventional problem, which
may for instance be solved by using the by now classical tools of renormalization theory \cite{Unt-ren}. \\

{\bf Remark.} The base field is $K=\mathbb{R}$ or $\mathbb{C}$. 

\section{Words and rooted trees}

\subsection{The shuffle algebra}

Let $d \geq 1$. A {\it $d$-word} is a finite sequence of elements  taken in $\{1,\ldots,d\}$. The degree of a word is the number of its letters.
In particular, there exists only one word of degree $0$, the empty word, denoted by $1$. \\

The {\it shuffle Hopf algebra} $\Sh^d$ is, as a vector space, generated by the set of $d$-words. The product $\boxbar$ of $\Sh^d$
is given in the following way: if $w$ is a $d$-word of degree $k$, $w'$ is a $d$-word of degree $l$, then
$$w\boxbar w'=\sum_{w'' \in Sh(w,w')} w'',$$
where $Sh(w,w')$ is the set of all words obtained by shuffling the letters of $w$ and $w'$. For example, 
if $(a_1a_2a_3)$ and $(a_4a_5)$ are two $d$-words (that is to say $1 \leq a_1,a_2,a_3,a_4,a_5 \leq d$):
\begin{eqnarray*}
(a_1a_2a_3)\boxbar (a_4a_5)&=&(a_1a_2a_3a_4a_5)+(a_1a_2a_4a_3a_5)+(a_1a_2a_4a_5a_3)\\
&&+(a_1a_4a_2a_3a_5)+(a_1a_4a_2a_5a_3)+(a_1a_4a_5a_2a_3)\\
&&+(a_4a_1a_2a_3a_5)+(a_4a_1a_2a_5a_3)+(a_4a_1a_5a_2a_3)+(a_4a_5a_1a_2a_3).
\end{eqnarray*}
This product is commutative; the unit is the empty word $1$. The coproduct is defined on any $d$-word $w=(a_1\ldots a_n)$ by:
$$\Delta(w)=\sum_{i=0}^n (a_1\ldots a_i) \otimes (a_{i+1}\ldots a_n).$$
For example:
\begin{eqnarray*}
\Delta(a_1a_2a_3a_4)&=&a_1a_2a_3a_4 \otimes 1+a_1a_2a_3 \otimes a_1+a_1a_2 \otimes a_3a_4+a_1 \otimes a_2a_3a_4 
+1\otimes a_1a_2a_3a_4.
\end{eqnarray*}
The counit sends $1$ to $1$ and any non-empty word to $0$. The antipode $S$ sends the word $(a_1\ldots a_n)$ to $(-1)^n (a_n\ldots a_1)$.\\

We shall consider in the sequel $d$-words as trunk trees. For example, we shall identify the $d$-word $(abc)$ with the trunk tree $\tdtroisdeux{$a$}{$b$}{$c$}$.
Considering $d$-words as trunk trees, $\Sh^d$ becomes a vector subspace and a sub-coalgebra (but not a subalgebra) of $\H^d$ 
whose definition we shall now recall.

\subsection{Reminders on rooted trees and forests}

A {\it rooted tree} is a finite tree with a distinguished vertex called the root \cite{Stanley}. 
A rooted forest is a finite graph $\mathcal{F}$ such that any connected component of $\mathcal{F}$ is a rooted tree.
The set of vertices of the rooted forest $\mathcal{F}$ is denoted by $V(\mathcal{F})$.

Let $\mathcal{F}$ be a rooted forest. The edges of $\mathcal{F}$ are oriented downwards (from the leaves to the roots). If $v,w \in V(\mathcal{F})$, with  $v\neq w$,
we shall denote $v \rightarrow w$ if there is an edge in $\mathcal{F}$ from $v$ to $w$ and $v \twoheadrightarrow w$ 
if there is an oriented path from $v$ to $w$ in $\mathcal{F}$.

Let $\vec{v}$ be a subset of $V(\mathcal{F})$. We shall say that $\vec{v}$ is an admissible cut of $\mathcal{F}$, 
and we shall write $\vec{v} \models V(\mathcal{F})$, if $\vec{v}$ is totally disconnected, that is to say that
$v \twoheadrightarrow w \hspace{-.7cm} / \hspace{.7cm}$ for any couple $(v,w)$ of two different elements of $\vec{v}$.
If $\vec{v} \models V(\mathcal{F})$, we denote by $Lea_{\vec{v}}\mathcal{F}$ the rooted sub-forest of $\mathcal{F}$ obtained by keeping
only the vertices above $\vec{v}$, that is to say $\{ w \in V(\mathcal{F}), \: \exists v \in \vec{v}, \:w \twoheadrightarrow v \}\cup \vec{v}$.
We denote by $Roo_{\vec{v}}\mathcal{F}$ the rooted sub-forest obtained by keeping the other vertices.

Connes and Kreimer proved in \cite{Connes} that the vector space $\h$ generated by the set of rooted forests is a Hopf algebra. 
Its product is given by the disjoint union of rooted forests, and the coproduct is defined for any rooted forest $\mathcal{F}$ by:
$$\Delta(\mathcal{F})=\sum_{\vec{v} \models V(\mathcal{F})} Roo_{\vec{v}}\mathcal{F} \otimes Lea_{\vec{v}}\mathcal{F}.$$
For example:
$$\Delta\left(\tquatredeux\right)=\tquatredeux \otimes 1+1\otimes \tquatredeux
+\ttroisun \otimes \tun+\tdeux \otimes \tdeux+\ttroisdeux \otimes \tun
+\tdeux \otimes \tun\tun+\tun \otimes \tdeux\tun.$$
The antipode $\bar{S}$  is inductively defined by:
\begin{eqnarray*}
\bar{S}(1)&=&1,\\
\bar{S}(\mathcal{F})&=&-\mathcal{F}-\sum_{\substack{\vec{v} \models V(\mathcal{F}) \\ Roo_{\vec{v}}\mathcal{F} \neq \mathcal{F},
Lea_{\vec{v}}\mathcal{F} \neq \mathcal{F}}} Roo_{\vec{v}}\mathcal{F}\  \bar{S}(Lea_{\vec{v}}\mathcal{F}).
\end{eqnarray*}

This construction is easily generalised to $d$-decorated rooted forests. A $d$-decorated forest is a couple $(\mathcal{F},\ell)$,
 where $\mathcal{F}$ is a rooted forest and $\ell$ a map from $V(\mathcal{F})$ to $\{1,\ldots,d\}$. If $\mathcal{F}$ and $\mathcal{G}$ are two $d$-decorated forests,
then $\mathcal{F} \mathcal{G}$ is naturally $d$-decorated. For any $\vec{v}\models V(\mathcal{F})$, $Lea_{\vec{v}}\mathcal{F}$ 
and $Roo_{\vec{v}}\mathcal{F}$ are also $d$-decorated by restriction, so the vector space $\h^d$ generated by $d$-decorated rooted forests is a Hopf algebra. \\

We already mentioned that $\Sh^d$ is a sub-coalgebra of $\H^d$. We shall see later on in subsection 4.2 that $\Sh^d$
may also be seen as a quotient Hopf algebra of $\H^d$, which accounts for the notation $\bar{S}$.

\section{Ordered rooted trees and permutations}

We shall here generalize the construction of product and the coproduct of $\h^d$ to the space generated by ordered rooted forests.

\subsection{Hopf algebra of ordered trees}

\begin{defi} 
An ordered (rooted) forest is a rooted forest with a total order on the set of its vertices. The set of ordered forests will be denoted by $\F_o$;
for all $n \geq 0$, the set of ordered forests with $n$ vertices will be denoted by $\F_o(n)$. 
An ordered (rooted) tree is a connected ordered forest. The set of ordered trees will be denoted by $\T_o$; for all $n \geq 1$, the set of ordered trees
with $n$ vertices will be denoted by $\T_o(n)$. The $K$-vector space generated by $\F_o$ is denoted by
$\h_o$. It is a graded space, the homogeneous component of degree $n$ being $Vect(\F_o(n))$ for all $n \in \mathbb{N}$.
\end{defi}

For example:
\begin{eqnarray*}
\T_o(1)&=&\{\tdun{1}\},\\
\T_o(2)&=&\{\tddeux{1}{2},\tddeux{2}{1}\},\\
\T_o(3)&=&\left\{\tdtroisun{1}{2}{3},\tdtroisun{2}{1}{3},\tdtroisun{3}{1}{2},
\tdtroisdeux{1}{2}{3},\tdtroisdeux{1}{3}{2},\tdtroisdeux{2}{1}{3},\tdtroisdeux{2}{3}{1},\tdtroisdeux{3}{1}{2},\tdtroisdeux{3}{2}{1}\right\};\\ \\
\F_o(0)&=&\{1\},\\
\F_o(1)&=&\{\tdun{1}\},\\
\F_o(2)&=&\{\tdun{1}\tdun{2},\tddeux{1}{2},\tddeux{2}{1}\},\\
\F_o(3)&=&\left\{\tdun{1}\tdun{2}\tdun{3},
\tdun{1}\tddeux{2}{3},\tdun{1}\tddeux{3}{2},\tdun{2}\tddeux{1}{3},\tdun{2}\tddeux{3}{1},\tdun{3}\tddeux{1}{2},\tdun{3}\tddeux{2}{1},
\tdtroisun{1}{3}{2},\tdtroisun{2}{3}{1},\tdtroisun{3}{2}{1},
\tdtroisdeux{1}{2}{3},\tdtroisdeux{1}{3}{2},\tdtroisdeux{2}{1}{3},\tdtroisdeux{2}{3}{1},\tdtroisdeux{3}{1}{2},\tdtroisdeux{3}{2}{1}\right\}.
\end{eqnarray*}

If $\mathbb{F}$ and $\mathbb{G}$ are two ordered forests, then the rooted forest $\mathbb{FG}$ is also an ordered forest with, 
for all $v \in V(\mathbb{F})$, $w \in V(\mathbb{G})$, $v<w$. This defines a non-commutative product on the the set of ordered forests. 
For example, the product of $\tdun{1}$ and $\tddeux{1}{2}$ gives $\tdun{1}\tddeux{2}{3}$, whereas the product of $\tddeux{1}{2}$ and $\tdun{1}$
gives $\tddeux{1}{2}\tdun{3}=\tdun{3}\tddeux{1}{2}$. This product is linearly extended to $\h_o$, which in this way becomes a graded algebra.\\

If $\mathbb{F}$ is an ordered forest, then any subforest of $\mathbb{F}$ is also ordered. 
So we can define a coproduct $\Delta:\h_o \longmapsto \h_o\otimes \h_o$ on $\h_o$ in the following way: for all $\mathbb{F} \in \F_o$,
$$\Delta(\mathbb{F})=\sum_{\vec{v} \models V(\mathbb{F})} Roo_{\vec{v}}\mathbb{F}\otimes Lea_{\vec{v}}\mathbb{F}.$$
As for the Connes-Kreimer Hopf algebra of rooted trees \cite{Connes}, one can prove that this coproduct is coassociative, 
so $\h_o$ is a graded Hopf algebra. For example:
$$\Delta\left(\tdquatredeux{2}{3}{4}{1}\right)=\tdquatredeux{2}{3}{4}{1} \otimes 1+1\otimes \tdquatredeux{2}{3}{4}{1}
+\tdtroisun{1}{3}{2} \otimes \tdun{1}+\tddeux{1}{2} \otimes \tddeux{2}{1}+\tdtroisdeux{2}{3}{1} \otimes \tdun{1}
+\tddeux{1}{2} \otimes \tdun{1}\tdun{2}+\tdun{1} \otimes \tddeux{3}{1}\tdun{2}.$$

\subsection{Hopf algebra of heap-ordered trees}

\begin{defi} \cite{Grossman}
An ordered forest is {\it heap-ordered} if for all $i,j \in V(\mathbb{F})$, $(i\twoheadrightarrow j)$ $\Longrightarrow$ $(i>j)$. 
The set of heap-ordered forests will be denoted by $\F_{ho}$; for all $n \geq 0$, the set of heap-ordered forests with $n$ vertices will be denoted by $\F_{ho}(n)$.
A heap-ordered tree is a connected heap-ordered forest. The set of heap-ordered trees will be denoted by $\T_{ho}$; for all $n \geq 1$, 
the set of heap-ordered trees with $n$ vertices will be denoted by $\T_{ho}(n)$.
\end{defi}

For example:
\begin{eqnarray*}
\T_{ho}(1)&=&\{\tdun{1}\},\\
\T_{ho}(2)&=&\{\tddeux{1}{2}\},\\
\T_{ho}(3)&=&\left\{\tdtroisun{1}{3}{2},\tdtroisdeux{1}{2}{3}\right\};\\ \\
\F_{ho}(0)&=&\{1\},\\
\F_{ho}(1)&=&\{\tdun{1}\},\\
\F_{ho}(2)&=&\{\tdun{1}\tdun{2},\tddeux{1}{2}\},\\
\F_{ho}(3)&=&\left\{\tdun{1}\tdun{2}\tdun{3},\tdun{1}\tddeux{2}{3},\tdun{2}\tddeux{1}{3},\tdun{3}\tddeux{1}{2},\tdtroisun{1}{3}{2},\tdtroisdeux{1}{2}{3}\right\}.
\end{eqnarray*}

If $\mathbb{F}$ and $\mathbb{G}$ are two heap-ordered forests, then $\mathbb{FG}$ is also heap-ordered. If $\mathbb{F}$ is a heap-ordered forest, 
then any subforest of $\mathbb{F}$ is heap-ordered. So the subspace $\h_{ho}$ of $\h_o$ generated by the heap-ordered forests 
is a graded Hopf subalgebra of $\h_o$.\\

Note that $\h_{ho}$ is not commutative. Indeed, $\tdun{1}.\tddeux{1}{2}=\tdun{1}\tddeux{2}{3}$ and $\tddeux{1}{2}.\tdun{1}=\tddeux{1}{2}\tdun{3}$.
It is neither cocommutative. Indeed:
$$\Delta(\tdtroisun{1}{3}{2})
=\tdtroisun{1}{3}{2}\otimes 1+1\otimes \tdtroisun{1}{3}{2}+2\tddeux{1}{2}\otimes \tdun{1}+\tdun{1}\otimes \tdun{1}\tdun{2}.$$
So neither $\h_{ho}$ nor its graded dual $\h_{ho}^*$, is isomorphic to the Hopf algebra of heap-ordered trees of \cite{Grossman,Grossman2}, 
which is cocommutative. \\

It is not difficult to generalize these constructions to decorated versions. A $d$-decorated ordered forest is a couple $(\mathbb{F},\ell)$,
where $\mathbb{F}$ is an ordered forest and $\ell$ is a map from $V(\mathbb{F})$ to $\{1,\ldots,d\}$. A $d$-decorated ordered forest will 
be denoted by $(\mathbb{F},a_1\ldots a_n)$, where $a_i$ is the value of $\ell$ on the $i$-th vertex of $\mathbb{F}$.

If $\mathbb{F}$ and $\mathbb{G}$ are two $d$-decorated ordered forests,
then $\mathbb{F} \mathbb{G}$ is naturally $d$-decorated. For any $\vec{v}\models V(\mathbb{F})$, $Lea_{\vec{v}}\mathbb{F}$ 
and $Roo_{\vec{v}}\mathbb{F}$ are also $d$-decorated by restriction, so the vector space $\h_o^d$ generated by $d$-decorated ordered forests is a Hopf algebra. 
The subspace $\h_{ho}^d$ of $\h_o^d$ generated by $d$-decorated heap-ordered forests is a Hopf subalgebra.
For example, if $1\leq a_1,a_2,a_3 \leq d$:
\begin{eqnarray*}
(\tdun{1},a_1).(\tddeux{1}{2},a_2a_3)&=&(\tdun{1}\tddeux{2}{3},a_1a_2a_3),\\
\Delta((\tdtroisun{1}{3}{2},a_1a_2a_3))&=&(\tdtroisun{1}{3}{2},a_1a_2a_3) \otimes 1+1\otimes (\tdtroisun{1}{3}{2},a_1a_2a_3)
+(\tddeux{1}{2},a_1a_2) \otimes (\tdun{1},a_3)\\
&&+(\tddeux{1}{2},a_1a_3) \otimes (\tdun{1},a_2)+(\tdun{1},a_1)\otimes (\tdun{1}\tdun{1},a_2a_3).
\end{eqnarray*}

{\bf Notations.} For all $n \geq 1$, we shall denote the trunk tree with $n$ vertices by $\mathcal{T}_n$. This tree has a unique heap-ordering, from the root
to the unique leaf. Identifying $d$-words with $d$-decorated trunk trees, $\Sh^d$ is now seen as a subspace and a sub-coalgebra of $\h_{ho}^d$.
For example, we shall identify:
$$(a_1a_2a_3)=\tdtroisdeux{$a_1$}{$a_2$}{$a_3$}\hspace{1mm}=(\mathcal{T}_3,\ell),$$
where $\ell:\{1,2,3\}\longrightarrow \{1,\ldots,d\}$ sends $i$ to $a_i$ for all $1\leq i \leq 3$.

\subsection{Hopf algebra of permutations}

{\bf Notations.} Let $k,l$ be integers. A {\it $(k,l)$-shuffle} is a permutation $\zeta$ of $\{1,\ldots,k+l\}$, such that 
$\zeta^{-1}(1)<\ldots< \zeta^{-1}(k)$ and $\zeta^{-1}(k+1)<\ldots< \zeta^{-1}(k+l)$. The set of $(k,l)$-shuffles will be denoted by $Sh(k,l)$.\\

{\bf Remarks.} \begin{enumerate}
\item We represent a permutation $\sigma \in S_n$ by the word $(\sigma(1)\ldots \sigma(n))$.
Then $Sh(k,l)$ is the set of words $Sh((1\ldots k),(k+1\ldots k+l))$, with the notations of subsection 2.1. For example,
$Sh(2,1)=\{(123),\:(132),\:(312)\}$.
\item For any integers $k,l$, any permutation $\sigma \in \Sigma_{k+l}$ can be uniquely written as $(\sigma_1\otimes \sigma_2) \circ \epsilon$, where 
$\sigma_1 \in \Sigma_k$, $\sigma_2 \in \Sigma_l$, and $\epsilon \in Sh(k,l)$. Similarly, considering the inverses,
any permutation $\tau \in \Sigma_{k+l}$ can be uniquely written as $\zeta^{-1}\circ (\tau_1\otimes \tau_2)$, where 
$\tau_1 \in \Sigma_k$, $\tau_2 \in \Sigma_l$, and $\zeta \in Sh(k,l)$. Note that, whereas $\eps$ shuffles the lists
$(\sigma(1),\ldots,\sigma(k)),(\sigma(k+1),\ldots,\sigma(k+l))$,
$\zeta^{-1}$ renames the numbers of each lists $(\tau(1),\ldots,\tau(k)),(\tau(k+1),\ldots,\tau(k+l))$ without
changing their orderings. For instance, $\{((2 1)\otimes 3)\circ\eps,\eps\in Sh(2,1)\}=\{(2 1 3), (2 3 1), (3 2 1)\}$,
whereas $\{\zeta^{-1}\circ((2 1)\otimes 3), \zeta\in Sh(2,1)\}=\{(2 1 3),(3 1 2),(3 2 1)\} $.
\end{enumerate}

We here briefly recall the construction of the Hopf algebra $\FQSym$ of free quasi-symmetric functions, also called the Malvenuto-Reutenauer Hopf algebra
\cite{Duchamp,Malvenuto}. As a vector space, a basis of $\FQSym$ is given by the disjoint union of the symmetric groups $\Sigma_n$, for all $n \geq 0$.
 By convention, the unique element of $S_0$ is denoted by $1$.
The product of $\FQSym$ is given, for $\sigma \in \Sigma_k$, $\tau \in \Sigma_l$, by:
$$\sigma.\tau=\sum_{\epsilon \in Sh(k,l)} (\sigma \otimes \tau) \circ \epsilon.$$
In other words, the product of $\sigma$ and $\tau$ is given by shifting the letters of the word
representing $\tau$ by $k$, and then summing all the possible shufflings of this word and of the word representing $\sigma$.
For example:
\begin{eqnarray*}
(123)(21)&=&(12354)+(12534)+(15234)+(51234)+(12543)\\
&&+(15243)+(51243)+(15423)+(51423)+(54123).
\end{eqnarray*}

Let $\sigma \in \Sigma_n$. For all $0\leq k \leq n$, there exists a unique triple 
$\left(\sigma_1^{(k)},\sigma_2^{(k)},\zeta_k\right)\in \Sigma_k \times \Sigma_{n-k} \times Sh(k,l)$
such that $\sigma=\zeta_k^{-1} \circ \left(\sigma_1^{(k)} \otimes \sigma_2^{(k)}\right)$. The coproduct of $\FQSym$ is then defined by:
$$\Delta(\sigma)=\sum_{k=0}^n \sigma_1^{(k)} \otimes \sigma_2^{(k)}
=\sum_{k=0}^n \sum_{\substack{\sigma=\zeta^{-1}\circ (\sigma_1 \otimes \sigma_2)\\ \zeta \in Sh(k,l), \sigma_1 \in \Sigma_k,\sigma_2 \in \Sigma_l}}
\sigma_1 \otimes \sigma_2.$$
Note that $\sigma_1^{(k)}$ and $\sigma_2^{(k)}$ are obtained by cutting the word representing $\sigma$ between the $k$-th and the $k+1$-th letter,
and then {\it standardizing} the two obtained words, that is to say applying to their letters the unique increasing bijection to $\{1,\ldots,k\}$ or $\{1,\ldots,n-k\}$.
For example:
\begin{eqnarray*}
\Delta((41325))&=&1\otimes (41325)+Std(4)\otimes Std(1325)+Std(41)\otimes Std(325)\\
&&+Std(413)\otimes Std(25)+Std(4132)\otimes Std(5)+(41325)\otimes 1\\
&=&1\otimes (41325)+(1) \otimes (1324)+(21) \otimes (213)\\
&&+(312)\otimes (12)+(4132) \otimes (1)+(41325) \otimes (1).
\end{eqnarray*}
Then $\FQSym$ is a Hopf algebra. It is graded, with $\FQSym(n)=vect(\Sigma_n)$ for all $n \geq 0$. \\

It is also possible to give a decorated version of $\FQSym$. A $d$-decorated permutation is a couple $(\sigma,\ell)$, where $\sigma \in \Sigma_n$
and $\ell$ is a map from $\{1,\ldots,n\}$ to $\{1,\ldots,d\}$. A $d$-decorated permutation is represented by two superposed words 
$\left(\substack{a_1\ldots a_n\\b_1\ldots b_n}\right)$,
where $(a_1 \ldots a_n)$ is the word representing $\sigma$ and for all $i$, $b_i=\ell(a_i)$. The vector space $\FQSym^d$ generated by the set
of $d$-decorated permutations is a Hopf algebra. For example, if $1\leq a,b,c,d \leq d$:
\begin{eqnarray*}
\left(\substack{213\\bac}\right).\left(\substack{1\\d}\right)
&=&\left(\substack{2134\\bacd}\right)+\left(\substack{2143\\badc}\right)+\left(\substack{2413\\bdac}\right)+\left(\substack{4213\\dbac}\right),\\
\Delta\left(\substack{4321\\dcba}\right)&=&\left(\substack{4321\\dcba}\right)\otimes 1
+\left(\substack{321\\dcb}\right)\otimes \left(\substack{1\\a}\right)+\left(\substack{21\\dc}\right)\otimes \left(\substack{21\\ba}\right)+
\left(\substack{1\\d}\right) \otimes \left(\substack{321\\cba}\right)+1\otimes \left(\substack{4321\\dcba}\right).
\end{eqnarray*}
In other words, if $(\sigma,\ell)$ and $(\tau,\ell')$ are decorated permutations of respective degrees $k$ and $l$:
\BEQ (\sigma,\ell).(\tau,\ell')=\sum_{\epsilon \in Sh(k,l)}((\sigma \otimes \tau)\circ \epsilon, \ell \otimes \ell'), \EEQ
where $\ell \otimes \ell'$ is defined by $(\ell \otimes \ell')(i)=\ell(i)$ if $1 \leq i \leq m$ and $(\ell \otimes \ell')(m+j)=\ell'(j)$ if $1\leq j \leq m'$.
If $(\sigma,\ell)$ is a decorated permutation of degree $n$:
\BEQ \Delta((\sigma,\ell))=\sum_{k=0}^n \sum_{\substack{\sigma=\zeta^{-1}\circ (\sigma_1 \otimes \sigma_2)\\ 
\zeta \in Sh(k,l), \sigma_1 \in \Sigma_k,\sigma_2 \in \Sigma_l}} (\sigma_1 \otimes \sigma_2, (\ell \otimes \ell') \circ \zeta). \EEQ

In some sense, a $d$-decorated permutation can be seen as a word with a total order on the set of its letters.

\section{From ordered forests to permutations}

\subsection{Construction of the Hopf algebra morphism}

\begin{defi}[forest-order-preserving symmetries] \label{defifops}
Let $n \geq 0$. For all $\mathbb{F} \in \F_o(n)$, let $S_\mathbb{F}$ be the set of permutations $\sigma \in \Sigma_n$ such that for all $1\leq i,j \leq n$,
($i \twoheadrightarrow j$) $\Longrightarrow$ ($\sigma^{-1}(i)>\sigma^{-1}(j)$). The elements of $S_{\mathbb{F}}$ are called the forest-order-preserving
symmetries.
\end{defi}

\begin{prop}
Let us define:
$$\Theta:\left\{\begin{array}{rcl}
\h_o&\longmapsto&\FQSym\\
\mathbb{F} \in \F_o&\longmapsto&\displaystyle \sum_{\sigma \in S_\mathbb{F}} \sigma.
\end{array}\right.$$
Then $\Theta$ is a Hopf algebra morphism, homogeneous of degree $0$.
\end{prop}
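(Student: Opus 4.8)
The plan is to check separately that $\Theta$ respects the grading, the unit and counit, the product, and the coproduct; compatibility with the antipode is then automatic, since a bialgebra morphism between two Hopf algebras necessarily commutes with the antipodes. Homogeneity of degree $0$ is immediate, as a forest in $\F_o(n)$ maps to a sum of elements of $\Sigma_n$. For the unit, the empty forest $1$ has $S_1=\{\mathrm{id}\}\subset\Sigma_0$, so $\Theta(1)=1$; and for the counit, any nonempty $\mathbb{F}$ maps to a sum of permutations of positive degree, all killed by the counit of $\FQSym$, matching the counit of $\h_o$. Thus the work reduces to the algebra and coalgebra morphism properties.

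For the \emph{algebra morphism} property, consider $\mathbb{F}\in\F_o(k)$ and $\mathbb{G}\in\F_o(l)$, so that $\mathbb{F}\mathbb{G}$ has vertex set $\{1,\dots,k+l\}$, with $\{1,\dots,k\}$ carrying $\mathbb{F}$ and $\{k+1,\dots,k+l\}$ carrying the shifted copy of $\mathbb{G}$; crucially, the relation $\twoheadrightarrow$ in $\mathbb{F}\mathbb{G}$ relates only pairs lying in the same block. Using the unique factorisation $\rho=(\sigma\otimes\tau)\circ\epsilon$ with $\sigma\in\Sigma_k$, $\tau\in\Sigma_l$, $\epsilon\in Sh(k,l)$ recalled above, I would show that $\rho\in S_{\mathbb{F}\mathbb{G}}$ if and only if $\sigma\in S_\mathbb{F}$ and $\tau\in S_\mathbb{G}$. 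The point is that in the word of $\rho$ the values in $\{1,\dots,k\}$ occur in the relative order prescribed by $\sigma$ and those in $\{k+1,\dots,k+l\}$ in the order prescribed by $\tau$; hence for a within-block pair $i\twoheadrightarrow j$ the inequality $\rho^{-1}(i)>\rho^{-1}(j)$ is equivalent to the corresponding inequality for $\sigma$ or for $\tau$. Since no cross-block pair is related, this yields a bijection $S_{\mathbb{F}\mathbb{G}}\leftrightarrow S_\mathbb{F}\times S_\mathbb{G}\times Sh(k,l)$, and therefore $\Theta(\mathbb{F}\mathbb{G})=\Theta(\mathbb{F})\,\Theta(\mathbb{G})$.

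For the \emph{coalgebra morphism} property I first record the dictionary between admissible cuts and order filters: the map $\vec{v}\mapsto Lea_{\vec{v}}\mathbb{F}$ is a bijection from admissible cuts onto the up-closed subsets of the forest poset, with $Roo_{\vec{v}}\mathbb{F}$ the complementary down-closed subset. Next, for $\sigma\in S_\mathbb{F}$ and $0\le k\le n$, the defining condition forces the value set $A=\{\sigma(1),\dots,\sigma(k)\}$ to be down-closed and its complement $B$ to be up-closed, so $(A,B)$ is exactly the pair associated with a unique admissible cut $\vec{v}$. It then remains to prove that the standardisation map $\sigma\mapsto(\sigma_1^{(k)},\sigma_2^{(k)})$ restricts to a bijection from $\{\sigma\in S_\mathbb{F}:\{\sigma(1),\dots,\sigma(k)\}=A\}$ onto $S_{Roo_{\vec{v}}\mathbb{F}}\times S_{Lea_{\vec{v}}\mathbb{F}}$; here one uses that on a down-closed (resp.\ up-closed) vertex set the induced subforest carries the restriction of $\twoheadrightarrow$, and that standardisation preserves relative order, so the order-preserving conditions transfer in both directions. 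Summing over $k$ and over down-closed $A$, equivalently over admissible cuts $\vec{v}$, gives $\Delta\bigl(\Theta(\mathbb{F})\bigr)=\sum_{\vec{v}\models V(\mathbb{F})}\Theta(Roo_{\vec{v}}\mathbb{F})\otimes\Theta(Lea_{\vec{v}}\mathbb{F})=(\Theta\otimes\Theta)\bigl(\Delta(\mathbb{F})\bigr)$.

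I expect the coalgebra part to be the main obstacle, precisely because it requires setting up the correspondence between admissible cuts and the down-closed/up-closed splitting of the value set $\{\sigma(1),\dots,\sigma(k)\}$, and then verifying carefully that standardisation is compatible with the forest-order-preserving condition in both directions; the algebra part, by contrast, follows almost formally from the uniqueness of the shuffle factorisation.
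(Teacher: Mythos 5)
Your proposal is correct and takes essentially the same route as the paper's own proof: the algebra part via the unique shuffle factorisation giving $S_{\mathbb{F}\mathbb{G}}\cong S_{\mathbb{F}}\times S_{\mathbb{G}}\times Sh(k,l)$, and the coalgebra part via the bijection between pairs $(\sigma,k)$ and admissible cuts equipped with pairs of forest-order-preserving symmetries. Your order-filter/standardisation phrasing is a repackaging of the paper's explicit shuffle $\zeta_{\vec{v}}$ (which is exactly the standardisation of the two blocks), and your added remarks on unit, counit and antipode are standard facts the paper leaves implicit.
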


For example:
\begin{eqnarray*}
\Theta(\tdun{1})&=&(1),\\
\Theta(\tdun{1}\tdun{2})&=&(12)+(21),\\
\Theta(\tddeux{1}{2})&=&(12),\\
\Theta(\tdun{1}\tdun{2}\tdun{3})&=&(123)+(132)+(213)+(231)+(312)+(321),\\
\Theta(\tdun{1}\tddeux{2}{3})&=&(123)+(213)+(231),\\
\Theta(\tdun{2}\tddeux{1}{3})&=&(213)+(123)+(132),\\
\Theta(\tdun{3}\tddeux{1}{2})&=&(312)+(132)+(123),\\
\Theta(\tdtroisun{1}{3}{2})&=&(123)+(132),\\
\Theta(\tdtroisdeux{1}{2}{3})&=&(123).
\end{eqnarray*}
In particular, if $\mathbb{F}$ is the product of the trunk trees with respectively $k$ and $l$ vertices, totally ordered from their roots to their leaves, 
then $S_\mathbb{F}=Sh(k,l)$. \\

\begin{proof} Obviously, $\Theta$ is homogeneous of degree $0$. Let $\mathbb{F} \in \F_o(k)$, $\mathbb{G} \in \F_o(l)$. 
Let $\sigma \in S_{\mathbb{FG}}$. Then $\sigma$ can be uniquely written as $\sigma=(\sigma_1 \otimes \sigma_2)\circ \epsilon$,
with $\sigma_1 \in \Sigma_k$, $\sigma_2 \in \Sigma_l$, and $\epsilon \in Sh(k,l)$. 
If $i \twoheadrightarrow j$ in $\mathbb{F}$, then $i\twoheadrightarrow j$ in $\mathbb{FG}$, so:
\begin{eqnarray*}
\sigma^{-1}(i)&>&\sigma^{-1}(j)\\
\epsilon^{-1} \circ \left(\sigma_1^{-1}\otimes \sigma_2^{-1}\right)(i)&>&\epsilon^{-1} \circ \left(\sigma_1^{-1}\otimes \sigma_2^{-1}\right)(j)\\
\epsilon^{-1}\left(\sigma_1^{-1}(i)\right)&>&\epsilon^{-1}\left(\sigma_1^{-1}(j)\right)\\
\sigma_1^{-1}(i)&>&\sigma_1^{-1}(j),
\end{eqnarray*}
as $\epsilon^{-1}$ is increasing on $\{1,\ldots,k\}$. So $\sigma_1\in S_\mathbb{F}$. 
If $i \twoheadrightarrow j$ in $\mathbb{G}$, then $k+i \twoheadrightarrow k+j$ in $\mathbb{FG}$, so:
\begin{eqnarray*}
\sigma^{-1}(k+i)&>&\sigma^{-1}(k+j)\\
\epsilon^{-1} \circ \left(\sigma_1^{-1}\otimes \sigma_2^{-1}\right)(k+i)&>&\epsilon^{-1} \circ \left(\sigma_1^{-1}\otimes \sigma_2^{-1}\right)(k+j)\\
\epsilon^{-1} \left(k+\sigma_2^{-1}(i)\right)&>&\epsilon^{-1} \left(k+\sigma_2^{-1}(j)\right)\\
\sigma_2^{-1}(i)&>&\sigma_2^{-1}(j),
\end{eqnarray*}
as $\epsilon^{-1}$ is increasing on $\{k+1,\ldots,k+l\}$. So $\sigma_2 \in S_\mathbb{G}$.
Conversely, if $\sigma=(\sigma_1 \otimes \sigma_2)\circ \epsilon$,
with $\sigma_1 \in \Sigma_k$, $\sigma_2 \in \Sigma_l$, and $\epsilon \in Sh(k,l)$, the same computations shows that $\sigma \in S_{\mathbb{FG}}$. So:
$$S_{\mathbb{FG}}=\bigsqcup_{\epsilon \in Sh(k,l)} (S_\mathbb{F} \otimes S_\mathbb{G})\circ \epsilon.$$
So:
$$\Theta(\mathbb{FG})=\sum_{\epsilon \in Sh(k,l)} \sum_{\sigma_1 \in S_\mathbb{F}} \sum_{\sigma_2 \in S_\mathbb{G}}(\sigma_1 \otimes \sigma_2)\circ \epsilon=\Theta(\mathbb{F})\Theta(\mathbb{G}).$$
So $\Theta$ is an algebra morphism. \\

Let $\mathbb{F} \in \F_o(n)$ and let $\vec{v}$ be an admissible cut of $\mathbb{F}$. The vertices of $Roo_{\vec{v}}\mathbb{F}$ are $i_1<\ldots<i_k$ 
and the vertices of $Lea_{\vec{v}}\mathbb{F}$ are $j_1<\ldots <j_l$, with $k+l=n$. Let $\zeta_{\vec{v}}$ be the inverse of the permutation 
$(i_1,\ldots,i_k,j_1,\ldots,j_l)$. Note that $\zeta_{\vec{v}}$ is a $(k,l)$-shuffle. Let $\sigma_1 \in S_{Roo_{\vec{v}}\mathbb{F}}$ and 
$\sigma_2 \in S_{Lea_{\vec{v}}\mathbb{F}}$. Let us show that $\sigma=\zeta_{\vec{v}}^{-1} \circ (\sigma_1 \otimes \sigma_2) \in S_\mathbb{F}$.
If $i \twoheadrightarrow j$ in $\mathbb{F}$, then three cases are possible:
\begin{itemize}
\item $i$ and $j$ belong to $Roo_{\vec{v}}\mathbb{F}$, say $i=i_p$ and $j=i_q$. Then $i \twoheadrightarrow j$ in $Roo_{\vec{v}}\mathbb{F}$, so 
$\sigma_1^{-1}(p) > \sigma_1^{-1}(q)$. Then:
\begin{eqnarray*}
\sigma^{-1}(i)&=&\left(\sigma_1^{-1} \otimes \sigma_2^{-1}\right)\circ \zeta_{\vec{v}}(i)\\
&=&\left(\sigma_1^{-1} \otimes \sigma_2^{-1}\right)(p)\\
&=&\sigma_1^{-1}(p).
\end{eqnarray*}
Similarly, $\sigma^{-1}(j)=\sigma_1^{-1}(q)$. So $\sigma^{-1}(i)> \sigma^{-1}(j)$.
\item $i$ and $j$ belong to $Lea_{\vec{v}}\mathbb{F}$. The proof is similar.
\item $i$ belongs to $Lea_{\vec{v}}\mathbb{F}$ and $j$ belongs to $Roo_{\vec{v}}\mathbb{F}$. Then $k+1\leq \zeta_{\vec{v}}(i)\leq k+l$ and 
$1\leq \zeta_{\vec{v}}(j)\leq k$, so $\sigma^{-1}(j)\leq k<k+1 \leq \sigma^{-1}(i)$.
\end{itemize}
Conversely, let $\sigma \in S_\mathbb{F}$ and $0 \leq k \leq n$. We put $\zeta=\zeta_k$, $\sigma_1=\sigma_1^{(k)}$
and $\sigma_2=\sigma_2^{(k)}$, so that $\sigma=\zeta^{-1} \circ (\sigma_1\otimes \sigma_2)$. Let $\mathbb{G}$ be the sub-forest of $\mathbb{F}$ 
formed by the vertices $\zeta(1),\ldots,\zeta(k)$ and $\mathbb{H}$ be the sub-forest of $\mathbb{F}$ formed by the vertices $\zeta(k+1),\ldots,\zeta(k+l)$, 
with $l=n-k$. If $i$ is a vertex of $\mathbb{F}$ and $j$ is a vertex of $\mathbb{H}$ such that $i \twoheadrightarrow j$ in $\mathbb{F}$, 
then $\sigma^{-1}(i)> \sigma^{-1}(j)$. As $k+1\leq \zeta(j) \leq k+l$, $k+1 \leq \sigma^{-1}(j) \leq k+l$, so $k+1 \leq \sigma^{-1}(i) \leq k+l$ 
and $k+1 \leq \zeta(i) \leq k+l$: $i$ is a vertex of $\mathbb{H}$. As a consequence, there exists a (unique) admissible cut $\vec{v}$ such that
$\mathbb{G}=Roo_{\vec{v}}\mathbb{F}$ and $\mathbb{H}=Lea_{\vec{v}}\mathbb{F}$. By definition, $\zeta=\zeta_{\vec{v}}$.
It is not difficult to prove that $\sigma_1 \in S_{Roo_{\vec{v}}\mathbb{F}}$ and $\sigma_2 \in S_{Lea_{\vec{v}}\mathbb{F}}$. 
Hence, there is a bijection:
$$\left\{\begin{array}{rcl}
S_\mathbb{F}\times \{0,\ldots,n\}&\longmapsto&\displaystyle \bigsqcup_{\vec{v} \models V(\mathbb{F})} S_{Roo_{\vec{v}}\mathbb{F}}\times S_{Lea_{\vec{v}}\mathbb{F}}\\
(\sigma,k)&\longmapsto&\left(\sigma_1^{(k)},\sigma_2^{(k)}\right).
\end{array}\right.$$
Finally:
$$\Delta\circ \Theta(\mathbb{F})=\sum_{\sigma \in S_\mathbb{F}}\sum_{k=0}^n \sigma_1^{(k)} \otimes \sigma_2^{(k)}
=\sum_{\vec{v} \models V(\mathbb{F})} \sum_{\sigma_1 \in S_{Roo_{\vec{v}}\mathbb{F}}} 
\sum_{\sigma_2 \in S_{Lea_{\vec{v}}\mathbb{F}}} \sigma_1 \otimes \sigma_2
=(\Theta \otimes \Theta) \circ \Delta(\mathbb{F}).$$
So $\Theta$ is a coalgebra morphism. \end{proof}

\subsection{Restriction to heap-ordered forests}

\begin{prop} \label{p2}
The restriction of $\Theta$ to $\h_{ho}$ is an isomorphism of graded Hopf algebras.
\end{prop}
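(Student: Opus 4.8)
The strategy is to show that the restriction $\Theta|_{\h_{ho}}$ is both injective and surjective; since we already know from the previous proposition that $\Theta$ is a Hopf algebra morphism homogeneous of degree $0$, the restriction to the Hopf subalgebra $\h_{ho}$ is automatically a Hopf algebra morphism, and bijectivity will upgrade it to an isomorphism. Because everything is graded and each homogeneous component is finite-dimensional, it suffices to work degree by degree and to establish a bijection between the basis $\F_{ho}(n)$ of heap-ordered forests and the basis $\Sigma_n$ of $\FQSym(n)$. The key structural fact I would exploit is that for a \emph{heap-ordered} forest $\mathbb{F} \in \F_o(n)$, the identity permutation always lies in $S_\mathbb{F}$: indeed, if $i \twoheadrightarrow j$ then the heap-ordering forces $i > j$, i.e. $\mathrm{Id}^{-1}(i) = i > j = \mathrm{Id}^{-1}(j)$, so the defining condition of $S_\mathbb{F}$ is met. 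This means $\Theta(\mathbb{F}) = \mathrm{Id} + (\text{other permutations})$, which is the seed for a triangularity argument.

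The first main step is to attach to each permutation $\sigma \in \Sigma_n$ a canonical heap-ordered forest, giving a candidate inverse on basis elements. Given $\sigma$, I would read the word $(\sigma(1)\ldots\sigma(n))$ and build a forest on the vertex set $\{1,\ldots,n\}$ by the natural "decreasing subsequence" / insertion rule: place the vertices in the order $\sigma(1), \sigma(2), \ldots$, and attach each new vertex $\sigma(m)$ as a child of the most recently placed vertex that is still larger than it (equivalently, use the inversions of $\sigma$ to reconstruct the tree structure), recording the total order on vertices as $1 < 2 < \cdots < n$. One checks this produces a heap-ordered forest $\mathbb{F}_\sigma$ for which $\sigma \in S_{\mathbb{F}_\sigma}$, and moreover that $\sigma$ is the \emph{minimal} element of $S_{\mathbb{F}_\sigma}$ in a suitable order (for instance the order determined by the associated inversion sets). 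This is the construction I expect to carry the weight of the argument.

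The second step is the triangularity/counting argument. Because $|\F_{ho}(n)| = |\Sigma_n| = n!$ — which can be verified directly (each heap-ordered forest on $n$ totally ordered vertices is determined by choosing, for each vertex $i \geq 2$, a parent among the strictly smaller vertices or declaring $i$ a root, giving $1 \cdot 2 \cdots n = n!$ choices) — it suffices to prove either injectivity or surjectivity of $\Theta$ on degree $n$, and the finite-dimensional pigeonhole then yields the other. I would prove surjectivity by showing that the matrix of $\Theta$ with respect to the bases $\F_{ho}(n)$ and $\Sigma_n$ is unitriangular: order both index sets compatibly with the inversion partial order so that $\Theta(\mathbb{F}_\sigma) = \sigma + \sum_{\tau > \sigma} c_\tau \tau$, with the minimal term coming from the canonical construction above. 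A unitriangular matrix is invertible, so $\Theta|_{\h_{ho}(n)}$ is an isomorphism of vector spaces in each degree, hence globally.

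\medskip

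\textbf{Main obstacle.} The delicate point is establishing the triangularity precisely: I must verify that $\mathbb{F} \mapsto \min S_\mathbb{F}$ is a genuine bijection $\F_{ho}(n) \to \Sigma_n$ and that no two distinct heap-ordered forests share the same minimal forest-order-preserving symmetry. Equivalently, I need that the sets $S_\mathbb{F}$, as $\mathbb{F}$ ranges over $\F_{ho}(n)$, partition $\Sigma_n$ according to their minima in a way compatible with a fixed linear extension of the inversion order. Pinning down the correct order on permutations and proving that the "leading term" of $\Theta(\mathbb{F})$ is a single permutation occurring with coefficient exactly $1$, with all other contributions strictly higher, is where the real combinatorial care is required; the rest follows formally from gradedness and the dimension count.
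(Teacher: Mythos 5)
Your overall skeleton---$\Theta_{\mid \h_{ho}}$ is already a Hopf algebra morphism by the previous proposition, so it suffices to prove bijectivity degree by degree via the dimension count $\mathrm{card}(\F_{ho}(n))=n!=\dim \FQSym(n)$ together with a triangularity argument---is exactly the paper's strategy, and your dimension count (each vertex $i$ either is a root or chooses a parent among $\{1,\ldots,i-1\}$, giving $1\cdot 2\cdots n=n!$ heap-ordered forests) is correct and in fact more direct than the paper's route through heap-ordered trees and the bijection of Grossman--Larson.

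However, the central triangularity step fails as you have set it up, for two reasons. First, your insertion rule attaches $\sigma(m)$ as a child of the most recently placed vertex that is \emph{larger} than it; with the paper's convention ($i\twoheadrightarrow j \Rightarrow i>j$, i.e. parents are smaller than their children) this produces forests that are not heap-ordered, so the rule must read ``smaller'': attach $\sigma(m)$ below the nearest preceding letter smaller than it. Second, and more fundamentally, no ``minimal element'' triangularity can work: as you yourself observe, $\mathrm{Id}\in S_{\mathbb{F}}$ for \emph{every} heap-ordered forest $\mathbb{F}$, and $\mathrm{Id}$ is the global minimum of $\Sigma_n$ in the lexicographic order, in the weak order given by inversion-set containment, and in the Bruhat order alike. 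Hence $\min S_{\mathbb{F}}=\mathrm{Id}$ identically, the map $\mathbb{F}\mapsto \min S_{\mathbb{F}}$ is constant rather than a bijection, and an expansion $\Theta(\mathbb{F}_\sigma)=\sigma+\sum_{\tau>\sigma}c_\tau \tau$ is impossible unless $\sigma=\mathrm{Id}$. The triangularity must be run in the opposite direction: for each $\sigma$ one needs a heap-ordered forest $\mathbb{F}_\sigma$ such that $\sigma$ is the lexicographically \emph{greatest} element of $S_{\mathbb{F}_\sigma}$, so that $\Theta(\mathbb{F}_\sigma)=\sigma+(\text{lex-smaller terms})$; these images span $\FQSym(n)$ and the dimension count finishes the proof. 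This is precisely what the paper establishes, by induction on $\sigma$ in lexicographic order using local modifications of forests. Note that your insertion construction, once corrected, yields this directly and non-inductively: if each letter is attached as a child of the nearest preceding smaller letter, then at every step $\sigma(m)$ is the largest letter all of whose ancestors have already been placed, which says exactly that $\sigma=\max_{\mathrm{lex}} S_{\mathbb{F}_\sigma}$. So with the inequality in the insertion rule and the extremality claim both reversed, your argument becomes a clean replacement for the paper's induction; as written, though, its key verification is false.
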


\begin{proof}  As $\Theta$ is homogenous, $\Theta(\h_{ho}(n)) \subseteq \FQSym(n)$ for all $n \geq 0$. 
Let us first recall that $dim(\h_{ho}(n))=n!$. From Lemma 6.5 of \cite{Grossman2}, the number of heap-ordered trees with $n+1$ vertices is $n!$.
This is proved inductively, using the bijection:
$$\left\{\begin{array}{rcl}
\T_{ho}(n-1)\times \{1,\ldots,n-1\}&\longmapsto&\T_{ho}(n)\\
(t,i)&\longmapsto&\mbox{the heap-ordered tree obtained}\\
&&\mbox{ by grafting $n$ on the vertex $i$ of $t$}.
\end{array}\right.$$
If $t$ is a heap-ordered tree, then its root is its smallest element, so there is a bijection:
$$\left\{\begin{array}{rcl}
\T_{ho}(n+1)&\longmapsto &\F_{ho}(n)\\
t&\longmapsto&\mbox{the heap-ordered forest obtained by deleting the root of $t$}.
\end{array}\right.$$
So $card(\F_{ho}(n))=n!=dim(\h_{ho}(n))$. 

In order to prove that $\Theta_{\mid \h_{ho}}$ is an isomorphism, it is now enough to prove that $\Theta(\h_{ho}(n))=\FQSym(n)$.
We totally order the elements of $\Sigma_n$ by the lexicographic order. Let us prove that for any $\sigma \in \Sigma_n$,
there exists a heap-ordered forest $\mathbb{F}$ such that $\sigma$ is the greatest element of $S_\mathbb{F}$. 
This will imply that $\Theta(\h_{ho}(n))=\FQSym(n)$. If $\sigma=(1,\ldots,n)$ and $\mathbb{T}_n$ is the trunk tree with $n$ vertices, 
totally ordered from the root to the leaf, then $\mathbb{T}_n$ is heap-ordered and $\Theta(S_{\mathbb{T}_n})=\{(1,\ldots,n)\}$.
Let us take $\sigma\neq(1,\ldots,n)$ and let us assume that the result is true for any $\tau<\sigma$. Let $i$ be the smallest
index such that $\sigma(i)\neq i$. We put $j=\sigma(i)$, then $j>i$, so $j\geq 2$. Then, denoting $\tau$ the transposition permuting $j-1$ and $j$,
$\tau \circ \sigma=(1,\ldots,i-1,j-1,\ldots)<\sigma$. Let $\mathbb{F} \in \F_{ho}(n)$, such that the greatest element of $S_\mathbb{F}$ is $\tau \circ \sigma$.
We consider the vertices $i-1$ and $i$ of $\mathbb{F}$. As $\mathbb{F}$ is heap-ordered, two cases are possible:
\begin{itemize}
\item $i-1$ and $i$ are independent vertices of $\mathbb{F}$, that is to say $i \twoheadrightarrow  \hspace{-.45cm} / \hspace{.3cm} i-1$ and
$i-1 \twoheadrightarrow  \hspace{-.45cm} / \hspace{.3cm} i$. Then the ordered forest $\mathbb{F}'$ obtained from $\mathbb{F}$ 
by permuting the order of the vertices $i-1$ and $i$ is also heap-ordered, and $S_{\mathbb{F}'}=\tau\circ S_\mathbb{F}$. As a consequence, 
the greatest element of $S_{\mathbb{F}'}$ is $\sigma$. 
\item $i \rightarrow i-1$ in $\mathbb{F}$. If $i-1$ is not a root of $\mathbb{F}$, is has a direct ascendant $k$, 
then we locally apply the following local transformation on $\mathbb{F}$ to obtain a new heap-ordered forest $\mathbb{F}'$:
$$\includegraphics[height=2cm]{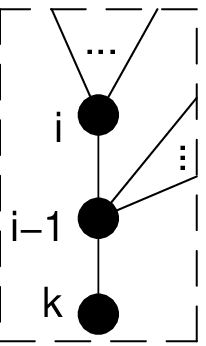}\:\longmapsto \:\includegraphics[height=2cm]{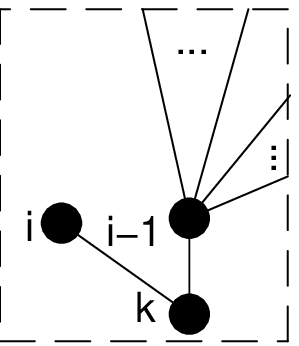}.$$
if $i$ is a root of $\mathbb{F}$, then we locally apply the following local transformation on $\mathbb{F}'$ to obtain a new heap-ordered forest $\mathbb{F}'$:
$$\includegraphics[height=1.6cm]{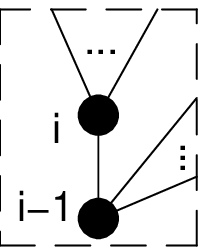}\:\longmapsto\:\includegraphics[height=1.6cm]{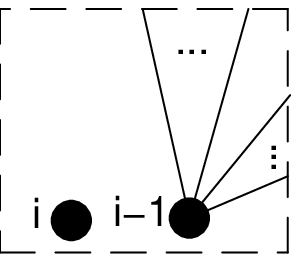}.$$
For example, if $\mathbb{F}=\tdquatredeux{1}{4}{2}{3}$ and $i=2$, $\mathbb{F}'=\tdun{2}\tdtroisun{1}{4}{3}$ ; 
if $i=3$, then $\mathbb{F}'=\tdquatreun{1}{4}{3}{2}$.

\end{itemize}
It is not difficult to see that the greatest element of $S_{\mathbb{F}'}$ is $\sigma$. As a conclusion, $\Theta_{\mid \h_{ho}}$ is an isomorphism. \end{proof}\\

\begin{defi}
Let  $\sigma \in \Sigma_n$. There exists a unique element $\mathbb{T}^\sigma \in \h_{ho}$, such that 
$\Theta(\mathbb{T}^\sigma)=\sigma^{-1}$. 
\end{defi}

\begin{lemma} \label{l3}
\begin{enumerate}
\item For any $(\sigma,\tau) \in \Sigma_k \times \Sigma_l$, 
\BEQ \mathbb{T}^\sigma \mathbb{T}^\tau=\sum_{\zeta \in Sh(k,l)} \mathbb{T}^{\zeta^{-1} \circ (\sigma \otimes \tau)}.\EEQ
\label{eq:TT1}
\item For any $\sigma \in \Sigma_n$,
\BEQ \Delta(\mathbb{T}^\sigma)=\sum_{k=0}^n \sum_{\substack{\sigma=(\sigma_1 \otimes \sigma_2)\circ \epsilon\\
\sigma_1 \in \Sigma_k,\: \sigma_2 \in \Sigma_{n-k},\: \epsilon \in Sh(k,n-k)}}
\mathbb{T}^{\sigma_1} \otimes \mathbb{T}^{\sigma_2}.  \label{eq:TT2} \EEQ
\end{enumerate}
\end{lemma}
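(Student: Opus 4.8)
The plan is to deduce both identities directly from the fact, established in Proposition~\ref{p2}, that $\Theta_{\mid \h_{ho}}$ is a Hopf algebra \emph{isomorphism}. Since $\Theta(\mathbb{T}^\sigma)=\sigma^{-1}$ by the defining property of $\mathbb{T}^\sigma$, every statement about products and coproducts of the $\mathbb{T}^\sigma$ can be transported to a statement about products and coproducts in $\FQSym$ of the basis elements $\sigma^{-1}$, and there it is just the structure of $\FQSym$ recalled in subsection~3.3.

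First I would prove the product formula~\eqref{eq:TT1}. Because $\Theta$ is an algebra morphism and is injective on $\h_{ho}$, it suffices to show that $\Theta$ sends both sides to the same element of $\FQSym$. The left-hand side maps to $\Theta(\mathbb{T}^\sigma)\,\Theta(\mathbb{T}^\tau)=\sigma^{-1}\tau^{-1}$, and by the product rule of $\FQSym$ this equals $\sum_{\epsilon\in Sh(k,l)}(\sigma^{-1}\otimes\tau^{-1})\circ\epsilon$. For the right-hand side I would apply $\Theta$ termwise, using $\Theta(\mathbb{T}^{\zeta^{-1}\circ(\sigma\otimes\tau)})=\bigl(\zeta^{-1}\circ(\sigma\otimes\tau)\bigr)^{-1}=(\sigma^{-1}\otimes\tau^{-1})\circ\zeta$. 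The two sums then coincide after noting that $\zeta$ ranges over $Sh(k,l)$ exactly as $\epsilon$ does; this is the small bookkeeping step where one must be careful that inverting turns a left shuffle into a right shuffle, matching Remark~2 of subsection~3.3.

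Next I would treat the coproduct formula~\eqref{eq:TT2}. Here I would compute $(\Theta\otimes\Theta)\circ\Delta(\mathbb{T}^\sigma)$, which by the coalgebra-morphism property equals $\Delta_{\FQSym}(\sigma^{-1})$. Writing out $\Delta_{\FQSym}(\sigma^{-1})$ using the decomposition $\sigma^{-1}=\eta^{-1}\circ(\rho_1\otimes\rho_2)$ of subsection~3.3, and then relabelling via $\sigma=(\rho_1^{-1}\otimes\rho_2^{-1})\circ\eta$, I would identify each tensor factor as $\Theta(\mathbb{T}^{\sigma_1})\otimes\Theta(\mathbb{T}^{\sigma_2})$ with $\sigma=(\sigma_1\otimes\sigma_2)\circ\epsilon$, $\epsilon\in Sh(k,n-k)$. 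Since $\Theta\otimes\Theta$ is injective, matching images forces $\Delta(\mathbb{T}^\sigma)$ to be the claimed sum.

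The only genuine obstacle is the index-chasing in passing between the ``$\zeta^{-1}\circ(\sigma\otimes\tau)$'' and ``$(\sigma_1\otimes\sigma_2)\circ\epsilon$'' presentations of a permutation, i.e.\ keeping straight which factorization is used on $\sigma$ versus on $\sigma^{-1}$; this is precisely the content of Remark~2 and requires no new idea, only a careful inversion. Everything else is formal transport along the isomorphism $\Theta$, so the lemma is essentially a restatement of the $\FQSym$ structure under the identification $\mathbb{T}^\sigma\leftrightarrow\sigma^{-1}$.
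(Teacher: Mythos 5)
Your proposal is correct and follows essentially the same route as the paper's proof: apply $\Theta$ (resp.\ $\Theta\otimes\Theta$) to both sides, use the product and coproduct rules of $\FQSym$ together with the inversion identity $\bigl(\zeta^{-1}\circ(\sigma\otimes\tau)\bigr)^{-1}=(\sigma^{-1}\otimes\tau^{-1})\circ\zeta$, and conclude by injectivity of $\Theta_{\mid \h_{ho}}$ (resp.\ $\Theta_{\mid \h_{ho}}\otimes\Theta_{\mid \h_{ho}}$). The only quibble is bibliographic: the $\FQSym$ structure and the two-factorization remark you invoke are recalled in the subsection on the Hopf algebra of permutations, not in the subsection on the action of symmetric groups.
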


\begin{proof} 1. Indeed:
$$\Theta(\mathbb{T}^\sigma \mathbb{T}^\tau)=\Theta(\mathbb{T}^\sigma)\Theta(\mathbb{T}^\tau)=\sigma^{-1}\tau^{-1}
=\sum_{\epsilon \in Sh(k,l)} (\sigma^{-1} \otimes \tau^{-1})\circ \epsilon
=\sum_{\zeta \in Sh(k,l)} \Theta\left(\mathbb{T}^{\zeta^{-1} \circ (\sigma \otimes \tau)}\right).$$
We conclude with the injectivity of $\Theta_{\mid \h_{ho}}$. \\

2. Indeed:
\begin{eqnarray*}
(\Theta \otimes \Theta)\circ \Delta(\mathbb{T}^{\sigma})&=&\Delta(\sigma^{-1})\\
&=&\sum_{k=0}^n \sum_{\substack{\sigma^{-1}=\zeta^{-1}\circ (\tau_1 \otimes \tau_2)\\
\tau_1 \in \Sigma_k,\: \tau_2 \in \Sigma_{n-k},\: \zeta \in Sh(k,n-k)}} \tau_1 \otimes \tau_2\\
&=&\sum_{k=0}^n \sum_{\substack{\sigma=(\sigma_1 \otimes \sigma_2)\circ \epsilon\\
\sigma_1 \in \Sigma_k,\: \sigma_2 \in \Sigma_{n-k},\: \epsilon \in Sh(k,n-k)}} \sigma_1^{-1} \otimes \sigma_2^{-1}\\
&=&(\Theta \otimes \Theta)\left(\sum_{k=0}^n \sum_{\substack{\sigma=(\sigma_1 \otimes \sigma_2)\circ \epsilon\\
\sigma_1 \in \Sigma_k,\: \sigma_2 \in \Sigma_{n-k},\: \epsilon \in Sh(k,n-k)}}
\mathbb{T}^{\sigma_1} \otimes \mathbb{T}^{\sigma_2}\right).
\end{eqnarray*}
We conclude with the injectivity of $\Theta_{\mid \h_{ho}}\otimes \Theta_{\mid \h_{ho}}$. \end{proof}\\

For example:
\begin{eqnarray*}
\mathbb{T}^{(1)}&=&\tdun{1},\\
\mathbb{T}^{(12)}&=&\tddeux{1}{2},\\
\mathbb{T}^{(21)}&=&\tdun{1}\tdun{2}-\tddeux{1}{2},\\
\mathbb{T}^{(123)}&=&\tdtroisdeux{1}{2}{3},\\
\mathbb{T}^{(132)}&=&\tdtroisun{1}{3}{2}-\tdtroisdeux{1}{2}{3},\\
\mathbb{T}^{(213)}&=&\tdun{2}\tddeux{1}{3}-\tdtroisun{1}{2}{3},\\
\mathbb{T}^{(231)}&=&\tdun{3}\tddeux{1}{2}-\tdtroisun{1}{2}{3},\\
\mathbb{T}^{(312)}&=&\tdun{1}\tddeux{2}{3}-\tdtroisdeux{1}{2}{3}-\tdun{2}\tddeux{1}{3}+\tdtroisun{1}{3}{2},\\
\mathbb{T}^{(321)}&=&\tdun{1}\tdun{2}\tdun{3}-\tdun{3}\tddeux{1}{2}-\tdun{1}\tddeux{2}{3}+\tdtroisdeux{1}{2}{3}.
\end{eqnarray*}

So:
\begin{eqnarray*}
\mathbb{T}^{(21)} \mathbb{T}^{(1)}&=&\mathbb{T}^{(213)}+\mathbb{T}^{(312)}+\mathbb{T}^{(321)},\\
\Delta\left(\mathbb{T}^{(321)}\right)&=&1\otimes \mathbb{T}^{(321)}+\tdun{$1$}\otimes (\tdun{$1$}\tdun{$2$}-\tddeux{$1$}{$2$})
+(\tdun{$1$}\tdun{$2$}-\tddeux{$1$}{$2$}) \otimes \tdun{$1$}+\mathbb{T}^{(321)}\otimes 1\\
&=&1\otimes \mathbb{T}^{(321)}+\mathbb{T}^{(1)}\otimes \mathbb{T}^{(21)}+\mathbb{T}^{(21)} \otimes \mathbb{T}^{(1)}+\mathbb{T}^{(321)}\otimes 1.
\end{eqnarray*}

We can also give a decorated version of this result. 
If $\ell$ is an application from $\{1,\ldots,m\}$ to $\{1,\ldots,d\}$ and $\ell'$ is an application from $\{1,\ldots,m'\}$ to $\{1,\ldots,d\}$,
let $(\mathbb{T}^\sigma,\ell)$ be the element of the Hopf algebra obtained by decorating all the forests appearing in $\mathbb{T}^\sigma$ by $\ell$;
we define similarly $(\mathbb{T}^{\sigma'},\ell')$. Then it comes directly from lemma \ref{l3} that:
$$(\mathbb{T}^\sigma,\ell).(\mathbb{T}^\tau,\ell')=\sum_{\zeta \in Sh(m,m')} \left(\mathbb{T}^{\zeta^{-1} \circ (\sigma \otimes \tau)},\ell\otimes \ell'\right).$$
Moreover, for any $\sigma \in \Sigma_n$,
$$\Delta(\mathbb{T}^\sigma,\ell)=\sum_{k=0}^n \sum_{\substack{\sigma=(\sigma_1 \otimes \sigma_2)\circ \epsilon\\
\sigma_1 \in \Sigma_k,\: \sigma_2 \in \Sigma_{n-k},\: \epsilon \in Sh(k,n-k)}}
(\mathbb{T}^{\sigma_1} \otimes \mathbb{T}^{\sigma_2},\ell \circ \epsilon^{-1}).$$

\subsection{Action of the symmetric groups on $\h_o$}

The symmetric group $\Sigma_n$ acts naturally on the set of ordered forests with $n$ vertices by changing the order of the vertices according to $\sigma$. 
For example, $\sigma. \tdtroisdeux{$i$}{$j$}{$k$}=\tdtroisdeux{$\sigma(i)$}{$\sigma(j)$}{$\sigma(k)$} \hspace{.4cm}$ if 
$\{i,j,k\}=\{1,2,3\}$. This action is extended by linearity to the homogeneous component $\h_o(n)$ of degree $n$ of $\h_o$. 

Let $\mathbb{F}$ be an ordered forest of degree $n$ and let $\tau \in \Sigma_n$. For any $\sigma \in \Sigma_n$:
\begin{eqnarray*}
\sigma \in S_{\tau.\mathbb{F}}&\Longleftrightarrow& \forall i,j \in V(\tau.\mathbb{F}),\: (i\twoheadrightarrow j \mbox{ in }\tau.\mathbb{F}) 
\Longrightarrow (\sigma^{-1}(i)>\sigma^{-1}(j))\\
&\Longleftrightarrow& \forall i,j \in V(\tau.\mathbb{F}),\: (\tau(i)\twoheadrightarrow \tau(j) \mbox{ in }\tau.\mathbb{F}) 
\Longrightarrow (\sigma^{-1}\circ \tau(i)>\sigma^{-1}\circ \tau(j))\\
&\Longleftrightarrow& \forall i,j \in V(\mathbb{F}),\: (i \twoheadrightarrow j \mbox{ in }\mathbb{F}) 
\Longrightarrow (\sigma^{-1}\circ \tau(i)>\sigma^{-1}\circ \tau(j))\\
&\Longleftrightarrow&\tau^{-1} \circ \sigma \in S_{\mathbb{F}}.
\end{eqnarray*}
As a consequence, $S_{\tau.\mathbb{F}}=\tau \circ S_{\mathbb{F}}$ so, for any $\mathbb{F} \in \h_o(n)$, for any $\tau \in \Sigma_n$,
$\Theta(\tau.\mathbb{F})=\tau \circ \Theta(\mathbb{F})$.\\

The subspace $\h_{ho}(n)$ of $\h_o(n)$ is clearly not stable under the action of $\Sigma_n$. More precisely, of $\mathbb{F}$ is a heap-ordered forest
and $\sigma \in \Sigma_n$, then $\sigma.\mathbb{F}$ is heap-ordered if, and only if, $\sigma^{-1} \in S_{\mathbb{F}}$.
In particular, if $\mathbb{F}$ and $\mathbb{G}$ are heap-ordered forests with respectively $k$ and $l$ vertices, then for any $(k,l)$-shuffle $\zeta$,
$\zeta \in S_{\mathbb{FG}}$, so $\zeta^{-1}.\mathbb{FG}$ is an element of $\h_{ho}(k+l)$. As a consequence, if 
$\sigma \in \Sigma_k$, $\tau \in \Sigma_l$ and $\epsilon \in Sh(k,l)$, $\epsilon^{-1}.\mathbb{T}^\sigma\mathbb{T}^\tau \in \h_{ho}$. We compute:
$$\Theta(\epsilon^{-1}.\mathbb{T}^\sigma\mathbb{T}^\tau)=\epsilon^{-1}\circ(\sigma^{-1}\tau^{-1})
=\sum_{\zeta \in Sh(k,l)} \epsilon^{-1} \circ (\sigma^{-1} \otimes \tau^{-1})\circ \zeta
=\sum_{\zeta \in Sh(k,l)} \Theta\left(\mathbb{T}^{\zeta^{-1} \circ (\sigma \otimes \tau)\circ \epsilon}\right).$$
From the injectivity of $\Theta_{\mid \h_{ho}}$, we deduce:

\begin{lemma} \label{l8}
For any $(\sigma,\tau,\epsilon) \in \Sigma_k \times \Sigma_l \times Sh(k,l)$, 
\BEQ \epsilon^{-1}.\mathbb{T}^\sigma \mathbb{T}^\tau=\sum_{\zeta \in Sh(k,l)} \mathbb{T}^{\zeta^{-1} \circ (\sigma \otimes \tau)\circ \epsilon}.\EEQ
\end{lemma}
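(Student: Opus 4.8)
The plan is to prove the identity by applying $\Theta$ to both sides and invoking the injectivity of $\Theta_{\mid \h_{ho}}$ established in Proposition \ref{p2}. The first thing I would check is that both sides genuinely lie in $\h_{ho}$, so that injectivity is applicable. The right-hand side is a sum of elements $\mathbb{T}^\rho$, each of which is by definition an element of $\h_{ho}$. For the left-hand side, $\mathbb{T}^\sigma \mathbb{T}^\tau$ is a linear combination of products $\mathbb{F}\mathbb{G}$ of heap-ordered forests with $k$ and $l$ vertices, hence itself a combination of heap-ordered forests; and since every $(k,l)$-shuffle lies in $S_{\mathbb{F}\mathbb{G}}$, the remark preceding the lemma (that $\rho.\mathbb{F}$ is heap-ordered if and only if $\rho^{-1}\in S_\mathbb{F}$), applied with $\rho=\epsilon^{-1}$, shows that $\epsilon^{-1}.\mathbb{T}^\sigma\mathbb{T}^\tau \in \h_{ho}$ by linearity.

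Next I would compute $\Theta$ of the left-hand side step by step. Using the equivariance relation $\Theta(\tau.\mathbb{F}) = \tau \circ \Theta(\mathbb{F})$ derived above (with the symmetry taken to be $\epsilon^{-1}$), together with the fact that $\Theta$ is an algebra morphism and that $\Theta(\mathbb{T}^\sigma) = \sigma^{-1}$, $\Theta(\mathbb{T}^\tau) = \tau^{-1}$ by definition, one obtains
$$\Theta(\epsilon^{-1}.\mathbb{T}^\sigma\mathbb{T}^\tau) = \epsilon^{-1} \circ \big(\Theta(\mathbb{T}^\sigma)\Theta(\mathbb{T}^\tau)\big) = \epsilon^{-1} \circ (\sigma^{-1}\tau^{-1}).$$
Expanding the product $\sigma^{-1}\tau^{-1}$ by the shuffle-product formula of $\FQSym$, namely $\sigma^{-1}\tau^{-1} = \sum_{\zeta \in Sh(k,l)} (\sigma^{-1} \otimes \tau^{-1})\circ \zeta$, this becomes
$$\Theta(\epsilon^{-1}.\mathbb{T}^\sigma\mathbb{T}^\tau) = \sum_{\zeta \in Sh(k,l)} \epsilon^{-1} \circ (\sigma^{-1} \otimes \tau^{-1}) \circ \zeta.$$

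The key bookkeeping step is then to recognise each summand as $\Theta$ of a single basis element $\mathbb{T}^\rho$. Since the block direct sum of permutations commutes with inversion, $\big(\zeta^{-1}\circ(\sigma\otimes\tau)\circ\epsilon\big)^{-1} = \epsilon^{-1}\circ(\sigma^{-1}\otimes\tau^{-1})\circ\zeta$, and by definition of $\mathbb{T}^\rho$ the right-hand side equals $\Theta\big(\mathbb{T}^{\zeta^{-1}\circ(\sigma\otimes\tau)\circ\epsilon}\big)$. Summing over $\zeta$ and pulling $\Theta$ out of the (finite) sum gives $\Theta(\epsilon^{-1}.\mathbb{T}^\sigma\mathbb{T}^\tau) = \Theta\big(\sum_{\zeta \in Sh(k,l)} \mathbb{T}^{\zeta^{-1}\circ(\sigma\otimes\tau)\circ\epsilon}\big)$. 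Both arguments of $\Theta$ lie in $\h_{ho}$, so the injectivity of $\Theta_{\mid\h_{ho}}$ yields the stated equality.

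I expect no serious conceptual obstacle: all the content is packaged into the equivariance relation $\Theta(\tau.\mathbb{F})=\tau\circ\Theta(\mathbb{F})$ and into Proposition \ref{p2}. The two places demanding care are the membership $\epsilon^{-1}.\mathbb{T}^\sigma\mathbb{T}^\tau\in\h_{ho}$, which is what legitimises the use of injectivity, and the direction of the compositions and inverses, where it is easy to swap the roles of the fixed $\epsilon$ and the summation index $\zeta$ or to misplace an inverse; keeping $\epsilon$ fixed throughout while $\zeta$ runs over $Sh(k,l)$ is the safeguard against this.
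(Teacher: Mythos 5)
Your proof is correct and follows essentially the same route as the paper's: both establish $\epsilon^{-1}.\mathbb{T}^\sigma\mathbb{T}^\tau\in\h_{ho}$ via the observation that shuffles lie in $S_{\mathbb{F}\mathbb{G}}$, compute $\Theta(\epsilon^{-1}.\mathbb{T}^\sigma\mathbb{T}^\tau)=\epsilon^{-1}\circ(\sigma^{-1}\tau^{-1})$ by equivariance, expand with the $\FQSym$ product formula, identify each summand as $\Theta\bigl(\mathbb{T}^{\zeta^{-1}\circ(\sigma\otimes\tau)\circ\epsilon}\bigr)$, and conclude by injectivity of $\Theta_{\mid\h_{ho}}$. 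Your write-up is in fact slightly more careful than the paper's, since it makes the inversion identity $\bigl(\zeta^{-1}\circ(\sigma\otimes\tau)\circ\epsilon\bigr)^{-1}=\epsilon^{-1}\circ(\sigma^{-1}\otimes\tau^{-1})\circ\zeta$ and the heap-ordered membership argument fully explicit.
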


\section{A commuting square of Hopf algebra epimorphisms}

\subsection{Definition of the square}

For any $d$, there is an isomorphism of Hopf algebras $\Theta^d:\h_{ho}^d\longrightarrow \FQSym^d$.
There are also a natural epimorphism of Hopf algebras $\pi_{ho}^d$ from $\h_{ho}^d$ to $\h^d$, sending a $d$-decorated, ordered forest to the underlying
$d$-decorated forest, and a natural epimorphism of Hopf algebras $\pi_{\Sigma}^d$ from $\FQSym^d$ to $\Sh^d$, sending the decorated permutation
$\left(\substack{a_1\ldots a_n\\b_1\ldots b_n}\right)$ to the word $(b_1\ldots b_n)$. Composing these results, we obtain the following diagram:
$$\xymatrix{\h_{ho}^d \ar[r]^{\Theta^d} \ar[d]_{\pi_{ho}^d}&\FQSym^d \ar[d]^{\pi_\Sigma^d}\\
\h^d&\Sh^d}$$

Let us consider a $d$-decorated rooted forest $\mathcal{F}$. We give it a heap-order to obtain an element $\mathbb{F}$ of $\h_{ho}^d$,
such that $\pi_{ho}^d(\mathbb{F})=\mathcal{F}$. 
It is then not difficult to show that $\pi_{\Sigma}^d \circ \Theta^d(\mathbb{F})$ does not depend of the choice of the heap-order on $\mathcal{F}$,
so this defines a map $\theta^d:\h^d\longrightarrow \Sh^d$, making the following diagram commuting:
$$\xymatrix{\h_{ho}^d \ar[r]^{\Theta^d} \ar[d]_{\pi_{ho}^d}&\FQSym^d \ar[d]^{\pi_\Sigma^d}\\
\h^d \ar[r]_{\theta^d}&\Sh^d}$$
As $\pi_{ho}^d$ is surjective, $\theta^d$ is a morphism of Hopf algebras. For example, if $1\leq a,b,c \leq d$:
\begin{eqnarray*}
\theta^d(\tdun{$a$})&=&(a),\\
\theta^d(\tddeux{$a$}{$b$})&=&(ab),\\
\theta^d(\tdun{$a$}\tdun{$b$})&=&(ab)+(ba),\\
\theta^d(\tdtroisun{$a$}{$c$}{$b$})&=&(abc)+(acb),\\
\theta^d(\tdtroisdeux{$a$}{$b$}{$c$})&=&(abc),\\
\theta^d(\tddeux{$a$}{$b$}\tdun{$c$})&=&(abc)+(acb)+(cab),\\
\theta^d(\tdun{$a$}\tdun{$b$}\tdun{$c$})&=&(abc)+(acb)+(bac)+(bca)+(cab)+(cba).
\end{eqnarray*}

Seeing $d$-words as $d$-decorated trunk trees (see subsection 2.1), one can write:
$$\theta^d(\tdtroisun{$a$}{$c$}{$b$})=\tdtroisdeux{$a$}{$b$}{$c$}+\tdtroisdeux{$a$}{$c$}{$b$}.$$
In other words, the image of a decorated tree by $\theta^d$ is the sum of all trunk trees with same decorations, 
whose total ordering is compatible with the partial ordering of the initial tree.\\

Let us now consider a $d$-word $\ell(1)\ldots \ell(n)$ of degree $n$ , or equivalently a trunk tree $\mathcal{T}$ with decoration $\ell$. We put:
$$\mathcal{T}^\sigma=\pi_{ho}^d \circ (\Theta^d)^{-1}(\sigma^{-1},\ell)=\pi_{ho}^d(\mathbb{T}^\sigma,\ell).$$
In other words, $\mathcal{T}^\sigma$ is obtained from $\mathbb{T}^\sigma$ by decorating the $i$-th vertex of the forests in $\mathbb{T}^\sigma$ 
by $\ell(i)$ for all $i$, and then deleting the orders on the the vertices. For example, if $1\leq a,b,c \leq d$:
\begin{eqnarray*}
\tdtroisdeux{$a$}{$b$}{$c$}^{(123)}&=&\tdtroisdeux{$a$}{$b$}{$c$},\\
\tdtroisdeux{$a$}{$b$}{$c$}^{(132)}&=&\tdtroisun{$a$}{$c$}{$b$}-\tdtroisdeux{$a$}{$b$}{$c$},\\
\tdtroisdeux{$a$}{$b$}{$c$}^{(213)}&=&\tdun{$b$}\tddeux{$a$}{$c$}-\tdtroisun{$a$}{$b$}{$c$},\\
\tdtroisdeux{$a$}{$b$}{$c$}^{(231)}&=&\tdun{$c$}\tddeux{$a$}{$b$}-\tdtroisun{$a$}{$b$}{$c$},\\
\tdtroisdeux{$a$}{$b$}{$c$}^{(312)}&=&\tdun{$a$}\tddeux{$b$}{$c$}-\tdtroisdeux{$a$}{$b$}{$c$}-\tdun{$b$}\tddeux{$a$}{$c$}+\tdtroisun{$a$}{$c$}{$b$},\\
\tdtroisdeux{$a$}{$b$}{$c$}^{(321)}&=&\tdun{$a$}\tdun{$b$}\tdun{$c$}-\tdun{$c$}\tddeux{$a$}{$b$}-\tdun{$a$}\tddeux{$b$}{$c$}+\tdtroisdeux{$a$}{$b$}{$c$}.
\end{eqnarray*}
For instance, $\tdtroisdeux{$a$}{$b$}{$c$}^{(132)}$ is a particular example of $\mathcal{T}^\sigma$, with $\mathcal{T}=\tdtroisdeux{$a$}{$b$}{$c$}$
and $\sigma=(132)$. The commutative square implies that $\theta^d(\mathcal{T}^\sigma)$ is the $d$-word $\ell \circ \sigma^{-1}(1)\ldots \ell \circ \sigma^{-1}(n)$. \\

As the edges of the commutative square are Hopf algebra morphisms, lemmas \ref{l3} and \ref{l8} and imply:

\begin{lemma} \label{l3bis} \begin{enumerate}
\item Let $\mathcal{T}_k=(\mathbb{T}_k,\ell_1)$ and $\mathcal{T}_l=(\mathbb{T}_l,\ell_2)$ be two $d$-words of respective degrees $k$ and $l$, 
seen as $d$-decorated trunk trees. For any $(\sigma,\tau,\epsilon) \in \Sigma_k \times \Sigma_l \times Sh(k,l)$:
\BEQ \epsilon^{-1}.\left((\mathbb{T}_k,\ell_1)^\sigma.(\mathbb{T}_l,\ell_2)^\tau\right)=
\sum_{\zeta \in Sh(k,l)} \left(\mathbb{T}_{k+l}, \ell_1 \otimes \ell_2\right)^{\zeta \circ (\sigma \otimes \tau)\circ \epsilon^{-1}}.\EEQ
Here, the action of $\epsilon$ is given by permutations of the decorations.
\item Let $\mathcal{T}=(\mathbb{T}_n,\ell)$ be a $d$-word of degree $n$, seen as a $d$-decorated trunk tree. For any $\sigma \in \Sigma_n$,
\BEQ \Delta(\mathcal{T}^\sigma)=\sum_{k=0}^n \sum_{\substack{\sigma=(\sigma_1 \otimes \sigma_2)\circ \epsilon\\
\sigma_1 \in \Sigma_k,\: \sigma_2 \in \Sigma_{n-k},\: \epsilon \in Sh(k,n-k)}}
\epsilon^{-1}. \left((\mathbb{T}^{\sigma_1},\ell_1) \otimes (\mathbb{T}^{\sigma_2}, \ell_2)\right).  \label{eq:TT2bis} \EEQ
Here also, the action of $\epsilon$ on tensors of decorated forests is given by permutation of the decorations
(the $\epsilon^{-1}$ in the right member comes from the fact that each decoration follows its vertex when we cut the forests of $\mathcal{T}^\sigma$,
so this permutes the letters of $\ell$ according to $\epsilon$).
\end{enumerate}
\end{lemma}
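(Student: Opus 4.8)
The plan is to obtain both identities by transporting the corresponding identities of $\h_{ho}^d$ through the morphism $\pi_{ho}^d$, using the defining relation $\mathcal{T}^\sigma=\pi_{ho}^d(\mathbb{T}^\sigma,\ell)$ together with the fact, recorded when the commuting square was set up, that $\pi_{ho}^d$ is a surjective morphism of Hopf algebras. Thus no new idea beyond lemmas \ref{l3} and \ref{l8} is needed; the whole content is the translation of the reordering action of $\Sigma_n$ on ordered forests into the action ``by permutation of the decorations'' on $\h^d$, together with the matching of the shuffle indices.

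For the coproduct (part 2) I would start from the decorated coproduct of $(\mathbb{T}^\sigma,\ell)$ recorded just after lemma \ref{l3}, i.e. $\Delta(\mathbb{T}^\sigma,\ell)=\sum_{k=0}^n\sum_{\sigma=(\sigma_1\otimes\sigma_2)\circ\epsilon}(\mathbb{T}^{\sigma_1}\otimes\mathbb{T}^{\sigma_2},\ell\circ\epsilon^{-1})$. Since $\pi_{ho}^d$ is a coalgebra morphism, $\Delta\circ\pi_{ho}^d=(\pi_{ho}^d\otimes\pi_{ho}^d)\circ\Delta$; applying $\pi_{ho}^d\otimes\pi_{ho}^d$ to this identity and using $\pi_{ho}^d(\mathbb{T}^{\sigma_i},m)=(\mathbb{T}_{|\sigma_i|},m)^{\sigma_i}$ produces $\Delta(\mathcal{T}^\sigma)$ as the announced sum. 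The one point to justify is that the decoration $\ell\circ\epsilon^{-1}$ carried by the tensor factor is exactly what the notation $\epsilon^{-1}.\big((\mathbb{T}^{\sigma_1},\ell_1)\otimes(\mathbb{T}^{\sigma_2},\ell_2)\big)$ abbreviates, where $\ell=\ell_1\otimes\ell_2$ is the splitting of the decoration word into its first $k$ and last $n-k$ letters; this is the content of the parenthetical remark that each decoration follows its vertex when the forests are cut.

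For the product (part 1) I would first upgrade lemma \ref{l8} to its decorated form, obtained from the undecorated statement exactly as the decorated product rule was obtained from lemma \ref{l3}: decorating $\mathbb{T}^\sigma$ by $\ell_1$ and $\mathbb{T}^\tau$ by $\ell_2$, the product $\mathbb{T}^\sigma\mathbb{T}^\tau$ carries $\ell_1\otimes\ell_2$, and the reordering $\epsilon^{-1}.$ drags each decoration along with its vertex, so the forests indexed by $\zeta^{-1}\circ(\sigma\otimes\tau)\circ\epsilon$ acquire an $\epsilon$-twisted decoration. Applying the algebra morphism $\pi_{ho}^d$ and rewriting $\pi_{ho}^d(\mathbb{T}^\rho,m)=(\mathbb{T}_{|\rho|},m)^\rho$ then yields an identity for $(\mathbb{T}_k,\ell_1)^\sigma\cdot(\mathbb{T}_l,\ell_2)^\tau$ in $\h^d$. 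It remains to recognise this identity as the stated one: the $\epsilon$-twist of the decorations is absorbed into the action $\epsilon^{-1}.$ on the left, while the passage from the exponent $\zeta^{-1}\circ(\sigma\otimes\tau)\circ\epsilon$ of lemma \ref{l8} to the exponent $\zeta\circ(\sigma\otimes\tau)\circ\epsilon^{-1}$ of the statement has to be read off from the precise definition of that action and from the inversion $\Theta(\mathbb{T}^\sigma)=\sigma^{-1}$.

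The main difficulty is not conceptual but bookkeeping. One must fix once and for all the dictionary between the reordering action of $\Sigma_n$ on $\h_o$ used in lemmas \ref{l3} and \ref{l8} and the action by permutation of decorations on $\h^d$ appearing in the statement, remembering that $\pi_{ho}^d$ forgets the order but keeps each decoration attached to its vertex. The delicate point is then to check the consistency of all the shuffle indices, in particular that the pair $(\zeta^{-1},\epsilon)$ occurring in lemma \ref{l8} matches the pair $(\zeta,\epsilon^{-1})$ demanded by the statement once the $\epsilon$-action and the convention $\Theta(\mathbb{T}^\sigma)=\sigma^{-1}$ are taken into account, and likewise that the decoration $\ell\circ\epsilon^{-1}$ of part 2 is correctly identified with the $\epsilon^{-1}$-action on the tensor. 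After this verification both formulas follow from lemmas \ref{l8} and \ref{l3} with no further computation.
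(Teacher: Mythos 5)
Your proposal follows exactly the paper's route: the paper's entire proof of this lemma is the one-line remark that, since the edges of the commuting square are Hopf algebra morphisms, the decorated forms of lemmas \ref{l3} and \ref{l8} project under $\pi_{ho}^d$ to the two stated formulas, which is precisely the argument you spell out. Your additional bookkeeping --- the dictionary between the reordering action on $\h_{ho}^d$ and the decoration-permuting action on $\h^d$, and the matching of the shuffle indices $(\zeta,\epsilon)$ with their inverses --- is a faithful (indeed more explicit) expansion of the same argument, addressing exactly the conventions the paper leaves implicit.
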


For example:
\begin{eqnarray*}
\tddeux{$a$}{$b$}^{(21)} \tdun{$c$}^{(1)}&=&\tdtroisdeux{$a$}{$b$}{$c$}^{(213)}+\tdtroisdeux{$a$}{$b$}{$c$}^{(312)}
+\tdtroisdeux{$a$}{$b$}{$c$}^{(321)},\\
\Delta\left(\tdtroisdeux{$a$}{$b$}{$c$}^{(321)}\right)&=&1\otimes \tdtroisdeux{$a$}{$b$}{$c$}^{(321)}+\tdun{$c$}\otimes (\tdun{$a$}\tdun{$b$}-\tddeux{$a$}{$b$})
+(\tdun{$b$}\tdun{$c$}-\tddeux{$b$}{$c$}) \otimes \tdun{$a$}+\tdtroisdeux{$a$}{$b$}{$c$}^{(321)}\otimes 1\\
&=&1\otimes \tdtroisdeux{$a$}{$b$}{$c$}^{(321)}+\tdun{$c$}^{(1)}\otimes \tddeux{$a$}{$b$}^{(21)}+\tddeux{$b$}{$c$}^{(21)} \otimes \tdun{$a$}^{(1)}
+\tdtroisdeux{$a$}{$b$}{$c$}^{(321)}\otimes 1\\
&=&1\otimes \tdtroisdeux{$a$}{$b$}{$c$}^{(321)}\otimes 1+(231).\left(\tdun{$a$}^{(1)}\otimes \tddeux{$b$}{$c$}^{(21)}\right)
+(312).\left(\tddeux{$a$}{$b$}^{(21)} \otimes \tdun{$c$}^{(1)}\right)+\tdtroisdeux{$a$}{$b$}{$c$}^{(321)}\otimes 1.
\end{eqnarray*}

\subsection{Applications to iterated integrals}

Let $H$ be a Hopf algebra and $A$  a commutative algebra. Then the set $Char_H(A)$ of algebra morphisms from $H$ to $A$
is a group for the convolution. More precisely, if $\phi,\psi \in Char_H(A)$, then $\phi *\psi=m_A \circ (\phi \otimes \psi) \circ \Delta_H$,
where $m_A$ is the product of $A$ and $\Delta_H$ the coproduct of $H$. The unit of $Char_H(A)$ is $x\mapsto \varepsilon_H(x)1_A$,
where $\varepsilon_H$ is the counit of $H$, and the inverse of $\phi$ is $\phi \circ S_H$, where $S_H$ is the antipode of $H$.

In particular, a character of the shuffle algebra $\Sh^d$ can be seen as a map $\phi$ from the set of $d$-words to $A$, such that $\phi(1)=1_A$ and, for 
any $d$-words $w$ and $w'$:
$$\phi(w)\phi(w')=\sum_{w'' \in Sh(w,w')}\phi(w'').$$
The convolution product of $\phi$ and $\psi$ is given by:
$$(\phi*\psi)(a_1\ldots a_k)=\sum_{i=0}^k \phi(a_1\ldots a_i)\psi(a_{i+1}\ldots a_n).$$
The inverse of the character $\phi$ is $\phi^{-1}=\phi \circ\bar{S}$:
$$\phi^{-1}(a_1\ldots a_n)=(-1)^n \phi(a_n \ldots a_1).$$

A character of $\h^d$ can be seen as a map from the set of $d$-decorated rooted trees to $A$, extended to $\h^d$ by multiplicativity.
The convolution product of $\phi$ and $\psi$ is given by:
$$(\phi*\psi)(\mathbb{F})=\sum_{\vec{v} \models V(\mathbb{F})} \phi(Roo_{\vec{v}}\mathbb{F}) \psi(Lea_{\vec{v}}\mathbb{F}).$$
Seeing $\Sh^d$ as a sub-coalgebra of $\h^d$, this formula for the convolution product also works for $d$-words seen as trunk trees. \\

\begin{defi} The canonical surjection $\pi_{ho}^d$ from $\h_{ho}^d$ to $\h^d$ induces a canonical injection of the group $Char_{\h^d}(A)$ into the 
group $Char_{\h_{ho}^d}(A)$: a character of $\h_{ho}^d$ is a character of $\h^d$ if, and only if, it does not depend of the orders on the vertices of
the ordered forests. In other words, for any $\psi \in Char_{\h_{ho}^d}(A)$, $\psi$ belongs to $Char_{\h^d}(A)$ if, and only if, it is invariant
under forest-order-preserving symmetries, that is to say for any ordered forest $\mathbb{F}$, for any $\sigma \in S_{\mathbb{F}}$,
$\psi(\sigma^{-1}.\mathbb{F})=\psi(\mathbb{F})$.
\end{defi}

As $\theta^d:\h^d \longrightarrow \Sh^d$ is a Hopf algebra morphism, there is a group morphism from $Char_{\Sh^d}(A)$ to $Char_{\h^d}(A)$,
sending a character $\phi$ to $\phi \circ \theta^d$. The character $\bar{\phi}:=
\phi \circ \theta^d$ of $\h^d$ will be called the {\em extension} of $\phi$.\\

For example, let us fix $d$ regular functions $\Gamma_1,\ldots, \Gamma_d$ and $s,t$ in $\mathbb{R}$. For any $d$-word $a_1\ldots a_n$, we let
$$I^{ts}_\Gamma(a_1\ldots a_n)=\int_s^t d\Gamma_{a_1}(x_1)\int_s^{x_1} d\Gamma_{a_2}(x_2) \ldots \int_s^{x_n-1} d\Gamma_{a_n}(x_n).$$
It is well-known that $I^{ts}_\Gamma$ is a character of $\Sh^d$. 
The extension of $I^{ts}_\Gamma$ is defined on any rooted forest $\mathbb{F}$ with decoration $\ell$  in the following way:
\begin{itemize}
\item Choose a heap-order on the vertices of $\mathbb{F}$.
\item Put $[s,t]^\mathbb{F}=\{(x_1,\ldots,x_n)\in [s,t]\:\mid \: \forall i,j,\: 
\left(i \twoheadrightarrow j \mbox{ in }\mathbb{F}\right)\Longrightarrow\left(x_i \leq x_j\right)\}$,
where $n=|\mathbb{F}|$.
\end{itemize}
Then, denoting by $i^-$ the ancestor of the vertex $i$ in $\mathbb{F}$:
\BEA \bar{I}^{ts}_\Gamma(\mathbb{F})&=&\int_{[s,t]^{\mathbb{F}}} d\Gamma_{\ell(1)}(x_1)\ldots d\Gamma_{\ell(n)}(x_n) \nonumber\\
&=& \int_s^t d\Gamma_{\ell(1)}(x_1)\int_s^{x_{2^-}} d\Gamma_{\ell(2)}(x_2)\ldots \int_s^{x_{n^-}} d\Gamma_{\ell(n)}(x_n).
\label{eq:barIts} \EEA

Note that $\bar{I}^{ts}_\Gamma$ is in fact defined on heap-ordered forests, so it is a character of $\h_{ho}^d$.
It is clearly invariant under forest-order-preserving symmetries, so it is a character of $\h^d$.\\

Note that one may alternatively define characters of $\H_{ho}^d$ in the following way. Let $\mu=\mu(dx_1,\ldots,dx_n)$
be some signed measure on $\R^n$, and $\mathbb{F}$ a heap-ordered forest with vertices $1,\ldots,n$. Then one lets
\BEQ I^{ts}_{\mu}(\mathbb{F})=\int_{[s,t]^{\mathbb{F}}} \mu(dx_1,\ldots,dx_n).\EEQ
In particular, letting $\mu_{(\Gamma,\ell)}=d\Gamma_{\ell(1)}\otimes\ldots \otimes d\Gamma_{\ell(n)}$, one has
$I^{ts}_{\Gamma}(\mu_{(\Gamma,\ell)})=I^{ts}_{\mu_{(\Gamma,\ell)}}(\mathbb{F})$,
but the notation $I^{ts}_{\mu}$ allows for an extension from tensor measures $d\Gamma_{\ell(1)}\otimes\ldots\otimes d\Gamma_{\ell(n)}$ to arbitrary measures. \\

As a consequence, if $\phi$ is a character of $\Sh^d$ extended to $\h^d$, and $\mathcal{T}$ is the trunk tree associated to the $d$-word 
$a_1\ldots a_k$, then for any $\sigma \in \Sigma_k$, as $\theta^d(\mathcal{T}^\sigma)=(a_{\sigma^{-1}(1)}\ldots a_{\sigma^{-1}(k)})$:
$$\bar{\phi}(\mathcal{T}^\sigma)=\phi\circ \theta^d(\mathcal{T}^\sigma)=\phi(a_{\sigma^{-1}(1)}\ldots a_{\sigma^{-1}(k)}).$$

This allows to compute easily $\mathbb{T}^\sigma$ and $\mathcal{T}^\sigma$, as explained below. Indeed, if $\mathcal{T}$ is the trunk tree
associated to the $d$-word $a_1\ldots a_k$:
\begin{eqnarray*}
\bar{I}_\Gamma^{st}(\mathcal{T}^\sigma)&=&I_\Gamma^{st}((a_{\sigma^{-1}(1)}\ldots a_{\sigma^{-1}(k)})\\
&=&\int_s^t d\Gamma_{a_{\sigma^{-1}(1)}}(x_{\sigma^{-1}(1)}) \int_s^{x_{\sigma^{-1}(1)}} d\Gamma_{a_{\sigma^{-2}(2)}}(x_{\sigma^{-2}(2)}) 
\ldots \int_s^{x_{\sigma^{-1}(n-1)}} d\Gamma_{a_{\sigma^{-1}(n)}}(x_{\sigma^{-1}(n)}).
\end{eqnarray*}
Using Fubini's theorem, we can write:
$$\bar{I}_\Gamma^{st}(\mathcal{T}^\sigma)=\int_s^t d\Gamma_{a_1}(x_1) \int_{s_2}^{t_2} d\Gamma_{a_2}(x_2)\ldots \int_{s_n}^{t_n}  d\Gamma_{a_n}(x_n),$$
where for all $i$, $s_j \in \{s,x_1,\ldots,x_{j-1}\}$ and $t_j \in \{t,y_1,\ldots, y_{j-1}\}$. Now decompose $\int_{s_i}^{t_i} d\Gamma_{a_i}(x_i)$ into:
$$\left(\int_s^{t_i}-\int_s^{s_i} \right) d\Gamma_{a_i}(x_i).$$
Then $\bar{I}_\Gamma^{st}(\mathbb{T}^\sigma)$ can be written as a sum of terms of the form:
$$\pm \int_s^{\tau_1} d\Gamma_{a_1}(x_1) \int_s^{\tau_2} d\Gamma_{a_2}(x_2) \ldots \int_s^{\tau_n} d\Gamma_{a_n}(x_n),$$
with $\tau_1=t$ and $\tau_i \in \{t,x_1,\ldots,x_{i-1}\}$ for $i=2\ldots n$. Each of these expressions is of the form $\pm I^{st}_\Gamma(\mathbb{F})$
for a particular $d$-decorated rooted forest $\mathbb{F}$, and this gives the expression of $\mathcal{T}^\sigma$,
as for example:
\begin{eqnarray*}
I^{ts}_\Gamma\left(\tdtroisdeux{$a_1$}{$a_2$}{$a_3$}^{(231)}\right)
&=&\int_s^t d\Gamma_{a_3}(x_3) \int_s^{x_3} d\Gamma_{a_1}(x_1) \int_s^{x_1} d\Gamma_{a_2}(x_2)\\
&=&\int_s^t d\Gamma_{a_1}(x_1) \int_s^{x_1} d\Gamma_{a_2}(x_2) \int_{x_1}^t d\Gamma_{a_3}(x_3)\\
&=&\int_s^t d\Gamma_{a_1}(x_1) \int_s^{x_1} d\Gamma_{a_2}(x_2) \int_s^t d\Gamma_{a_3}(x_3)\\
&&-\int_s^t d\Gamma_{a_1}(x_1) \int_s^{x_1} d\Gamma_{a_2}(x_2) \int_s^{x_1} d\Gamma_{a_3}(x_3)\\
&=&I^{ts}_\Gamma(\tddeux{$a_1$}{$a_2$}\: \tdun{$a_3$}\:)-I^{ts}_\Gamma(\hspace{2mm}\tdtroisun{$a_1$}{$a_3$}{\hspace{-1.5mm}$a_2$}\:),
\end{eqnarray*}
so $\tdtroisdeux{$a_1$}{$a_2$}{$a_3$}^{(231)}=\tddeux{$a_1$}{$a_2$}\: \tdun{$a_3$}\:-\hspace{2mm}\tdtroisun{$a_1$}{$a_3$}{\hspace{-1.5mm}$a_2$}\:$.
Choosing three different decorations $a_1=1$, $a_2=2$ and $a_3=3$, we obtain $\mathbb{T}^{(231)}=\tddeux{1}{2} \tdun{3}-\tdtroisun{1}{3}{2}$.\\


\section{Application to rough path theory: the Fourier normal ordering algorithm}

Let $\Gamma=(\Gamma_1(t),\ldots,\Gamma_d(t)):\R\to\R^d$ be some continuous path, compactly
supported in $[0,T]$. Assume that $\Gamma$ is not differentiable, but only 
$\alpha$-H\"older for some $0<\alpha<1$, i.e. bounded in the ${\mathcal C}^{\alpha}$-norm,
\BEQ ||\gamma||_{{\mathcal C}^{\alpha}}:=\sup_{t\in [0,T]} ||\Gamma(t)||+\sup_{s,t\in[0,T]}
\frac{||\Gamma(t)
-\Gamma(s)||}{|t-s|^{\alpha}}.\EEQ
Then iterated integrals of $\Gamma$, $I^{ts}_{\Gamma}(\mathcal{F})$, $\mathcal{F}\in\H^d$, are not
canonically defined. As explained in the Introduction, rough path theory may be seen
as a black box taking as input some lift of $\Gamma$ called {\em rough path over} $\Gamma$,
and producind e.g. solutions of differential equations driven by $\Gamma$. The usual
definition of a rough path is the following. We let in the sequel
 $\lfloor 1/\alpha\rfloor$ be the entire part of $1/\alpha$.

\begin{defi} \label{defiroughpath}

A rough path over $\Gamma$ is a functional $J_{\Gamma}^{ts}(i_1,\ldots,i_n)$, $n\le
\lfloor 1/\alpha\rfloor$, $i_1,\ldots,i_n\in\{1,\ldots,d\}$, such that $J_{\Gamma}^{ts}(i)=
\Gamma_i(t)-\Gamma_i(s)$ are the increments of $\Gamma$, and the following 3 properties
are satisfied:

\begin{itemize}
\item[(i)] (H\"older continuity) $J_{\Gamma}^{ts}(i_1,\ldots,i_n)$ is $n\alpha$-H\"older continuous as a function of two variables, namely, $\sup_{s,t\in\R} \frac{|J_{\Gamma}^{ts}(i_1,\ldots,i_n)|}{|t-s|^{\alpha}}<\infty.$
\item[(ii)] (Chen property) 
\BEQ J_{\Gamma}^{ts}(i_1,\ldots,i_n)=J_{\Gamma}^{tu}(i_1,\ldots,i_n)+
J_{\Gamma}^{us}(i_1,\ldots,i_n)+
\sum_{n_1+n_2=n} J_{\Gamma}^{tu}(i_1,\ldots,i_{n_1})J_{\Gamma}^{us}(i_{n_1+1},\ldots,i_{n});\EEQ

\item[(iii)] (shuffle property)
\BEQ J_{\Gamma}^{ts}(i_1,\ldots,i_{n_1})J_{\Gamma}^{ts}(j_1,\ldots,j_{n_2})=\sum_{\vec{k}\in
Sh(\vec{i},\vec{j})}
J_{\Gamma}^{ts}(k_1,\ldots,k_{n_1+n_2}),\EEQ
where $Sh(\vec{i},\vec{j})$ -- the set of shuffles of the words $\vec{i}$ and $\vec{j}$ -- has been defined
in subsection 1.1.
\end{itemize}

\end{defi}

Axioms (ii) and (iii) may be rewritten in a Hopf algebraic language: indexing
the $J_{\Gamma}^{ts}(i_1,\ldots,i_n)$ by trunk trees $\mathcal{T}$ with decoration
$\ell(j)=i_j$, $j=1,\ldots,n$, they are
equivalent to
\begin{itemize}
\item[{\em (ii)bis}] \BEQ J^{ts}_{\Gamma}(\mathcal{T})=\sum_{\vec{v}\models V(\mathcal{T})}  J^{tu}_{\Gamma}(\Roo_{\vec{v}}(\mathcal{T})) 
J^{us}_{\Gamma}(\Lea_{\vec{v}}(\mathcal{T})),\quad \mathcal{T}\in{\bf Sh}^d;\EEQ 
in other words, $J^{ts}_{\Gamma}=J^{tu}_{\Gamma} \ast J^{us}_{\Gamma}$ for the
shuffle convolution defined in subsection 4.2;
\item[{\em (iii)bis}] \BEQ J^{ts}_{\Gamma}(\mathcal{T})J^{ts}_{\Gamma}(\mathcal{T}')=J^{ts}_{\Gamma}(\mathcal{T} 
\shuffle \mathcal{T}'),
\quad \mathcal{T},\mathcal{T}'\in {\bf Sh}^d.\EEQ In other words, $J^{ts}_{\Gamma}$ is a character
of ${\bf Sh}^d$.
\end{itemize}

Such a functional indexed by {\em trunk trees} extends easily as in subsection 4.2 to a general {\em tree-indexed
functional} or {\em tree-indexed rough path} by setting $\bar{J}^{ts}_{\Gamma}(\mathbb{T}):=
J^{ts}_{\Gamma}\circ\theta^d(\mathbb{T})$, where $\theta^d:{\bf H}^d\to {\bf Sh}^d$ is the canonical
projection morphism defined in subsection 4.1. Since $\theta^d$ is a Hopf algebra
morphism, one gets immediately the generalized properties
\begin{itemize}
\item[{\em (ii)ter}]  $\bar{J}^{ts}_{\Gamma}=\bar{J}^{tu}_{\Gamma} \ast \bar{J}^{us}_{\Gamma}$ for the
 convolution of $\H^d$;
\item[{\em (iii)ter}] $\bar{J}^{ts}_{\Gamma}(\mathcal{T})\bar{J}^{ts}_{\Gamma}(\mathcal{T}')=
\bar{J}^{ts}_{\Gamma}(\mathcal{T}.\mathcal{T}')$, in other
words, $\bar{J}^{ts}_{\Gamma}$ is a character of $\H^d$.
\end{itemize}

Properties (ii), (iii) and their generalizations are satisfied for the usual integration
operators $I^{ts}_{\Gamma}$ and their tree extension $\bar{I}^{ts}_{\Gamma}$, provided
$\Gamma$ is a smooth path so that iterated integrals make sense \cite{Gu2,Kreimer}.

\bigskip

Suppose now one wishes to construct a rough path over $\Gamma$, and concentrate on the
algebraic properties (ii), (iii). Assume one has constructed characters of ${\bf Sh}^d$,
$J^{ts_0}_{\Gamma}$, $t\in [0,T]$ with $s_0$ fixed, such that $J_{\Gamma}^{ts_0}(i)=\Gamma_i(t)-\Gamma_i(s_0)$,
 then one immediately checks that
$J^{ts}_{\Gamma}:=J^{ts_0}_{\Gamma}\ast (J^{ss_0}_{\Gamma}\circ \bar{S})$ satisfies properties (ii)bis and 
(iii)bis. So the only difficult part consists in defining some regularized character of
${\bf Sh}^d$ satisfying the regularity properties (i).

\medskip

For different, intrinsically analytic reasons, it is natural to consider the Fourier
transformed path ${\mathcal F}\Gamma:\R\to\C^d$, $\xi\mapsto \frac{1}{\sqrt{2\pi}}\int_0^T e^{-\II t\xi}
\Gamma(t)dt$, or ${\mathcal F}\Gamma':\xi\mapsto \frac{1}{\sqrt{2\pi}}\int_0^T e^{-\II t\xi} d\Gamma(t)$. On the
one hand, the Besov characterization \cite{Trie} of $\alpha$-H\"older functions uses Fourier
multipliers. On  the other hand, fractional derivation -- given by a convolution operator,
which becomes simply multiplication by some fractional power of $|\xi|$ in Fourier --
is a natural way to generate $\alpha$-H\"older functions, such as e.g. the random paths of
fractional Brownian motion \cite{Alos} or of multifractional processes \cite{Peltier}. In any case, the following
two notions are natural from an analytic point of view:

\begin{defi} \label{defimeassplit}
\begin{itemize}
\item[(i)] (skeleton integral) Let
\BEQ \SkI^t_{\Gamma}(a_1\ldots a_n):=(2\pi)^{-n/2} \int_{\R^n} \prod_{j=1}^n
{\mathcal F}\Gamma'_{a_j}(\xi_j)d\xi_j\ \cdot\ \int^t dx_1\int^{x_1}dx_2\ldots
\int^{x_{n-1}} dx_n e^{\II(x_1\xi_1+\ldots+x_n\xi_n)},\EEQ
where, by definition, $\int^x e^{\II y\xi} dy=\frac{e^{\II x\xi}}{\II\xi}.$ It may be checked
that $\SkI^t_{\Gamma}$ is a character of ${\bf Sh}^d$, just as for usual iterated integrals.

The projection $\Theta^d$ yields immediately a generalization of this notion to tree skeleton
integrals, compare with eq. (\ref{eq:barIts}), if one sees $\overline{Skl}^t_\Gamma$ defined on heap-ordered forests:
\BEA \overline{\SkI}^t_{\Gamma}(\mathbb{T})=\SkI^t_{\Gamma}\circ\Theta^d(\mathbb{T})=
 \int^t d\Gamma_{\ell(1)}(x_1)\int_s^{x_{2^-}} d\Gamma_{\ell(2)}(x_2)\ldots \int^{x_{n^-}} d\Gamma_{\ell(n)}(x_n),\quad
\mathbb{T}\in {\bf H}_{ho}^d
\EEA

An explicit computation yields (\cite{Unterberger}, Lemma 4.5):
\BEQ  \overline{\SkI}^t_{\Gamma}(\mathbb{T})=(2\pi)^{-n/2} \int_{\R^n} \prod_{j=1}^n
{\mathcal F}\Gamma'_{\ell(j)}(\xi_j)d\xi_j\ \cdot\ \frac{e^{\II t(\xi_1+\ldots+\xi_n)}}{\prod_{i=1}^n
[\xi_i+\sum_{j\twoheadrightarrow i} \xi_j]}.\label{E18}\EEQ

As for usual iterated integrals, all this extends to non-tensor measures. Thus one may define
$\overline{\SkI}^t_{\mu}(\mathbb{F})$ if $\mu$ is a signed measure on $\R^n$, and $\mathbb{F}$ a heap-ordered forest with $n$ vertices.

\item[(ii)] (measure-splitting) Let $\mu$ be some signed measure with compact support,
typically, $\mu=\mu_{(\Gamma,\ell)}(dx_1,\ldots,dx_n)=\otimes_{j=1}^n d\Gamma_{\ell(j)}(x_j).$
  Then \BEQ \mu=\sum_{\sigma \in \Sigma_n} \mathcal{P}^\sigma \mu=\sum_{\sigma\in\Sigma_n} \mu^{\sigma}\circ\sigma^{-1},\EEQ
where 
\BEQ {\cal P}^{\sigma}:\mu\mapsto {\cal F}^{-1}\left( {\bf 1}_{|\xi_{\sigma(1)}|\le
\ldots\le |\xi_{\sigma(n)}|} {\cal F}\mu(\xi_1,\ldots,\xi_n) \right) \EEQ
is a Fourier projection, and $\mu^{\sigma}$ is defined by
\BEQ \mu^{\sigma}:={\cal P}^{{\mathrm{Id}}}(\mu\circ\sigma)=({\cal P}^{\sigma}\mu)\circ \sigma. \EEQ

In particular, the following obvious formulas hold:
\BEQ (\mu\circ\eps)^{\sigma}={\cal P}^{\Id}\left( (\mu\circ\eps)\circ\sigma\right)
=\mu^{\eps\circ\sigma},\quad \eps,\sigma\in\Sigma_n; \label{E22}\EEQ
\BEA \mu_1^{\sigma_1}\otimes \mu_2^{\sigma_2}&=& \sum_{\eps\ {\mathrm{shuffle}}} {\cal P}^{\eps}
\left( (\mu_1\otimes\mu_2)\circ(\sigma_1\otimes\sigma_2)\right)
\nonumber\\
&=& \sum_{\eps \ {\mathrm{shuffle}}} \left((\mu_1\otimes\mu_2)\circ(\sigma_1\otimes\sigma_2)\right)^{\eps}\circ\eps^{-1} \nonumber\\
&=& \sum_{\eps\ {\mathrm{shuffle}}} 
(\mu_1\otimes\mu_2)^{(\sigma_1\otimes\sigma_2)\circ\eps}\circ\eps^{-1}. 
\label{eq:musigma1-musigma2}
\EEA

The set of all measures whose Fourier transform is supported in $\{(\xi_1,\ldots,\xi_n);
|\xi_{1}|\le \ldots\le |\xi_{n}| \}$ will be  denoted by ${\cal P}^{\Id} Meas(\R^n)$. Thus $\mu^{\sigma}
\in  {\cal P}^{\Id} Meas(\R^n)$.

\end{itemize}
\end{defi}

To say things shortly, {\em skeleton integrals} are convenient when using Fourier coordinates, 
since they avoid awkward boundary terms such as those generated by  usual
integrals, $\int_0^x e^{\II y\xi}dy=\frac{e^{\II x\xi}}{\II \xi}-\frac{1}{\II\xi}$, 
which create terms with different homogeneity degree in $\xi$ by iterated integrations.
{\em Measure splitting} gives the {\em relative scales} of the Fourier coordinates; orders
of magnitude of the corresponding integrals may be obtained separately in each
sector $|\xi_{\sigma(1)}|\le \ldots\le |\xi_{\sigma(n)}|$. It turns out that these are
easiest to get after a permutation of the integrations (applying Fubini's theorem) such
that {\em innermost (or rightmost)integrals bear highest Fourier frequencies}. This is
the essence of {\em Fourier normal ordering}.

\medskip

Now comes the connection to the preceding sections. Let, for $\mathbb{T}\in {\cal F}_{ho}(n)$,
\BEQ {\cal P}^{\mathbb{T}} Meas(\R^n)=\left\{\mu; \vec{\xi}\in {\mathrm{supp}}({\cal F}\mu)\Rightarrow
 \left( (i\twoheadrightarrow j)\Rightarrow (|\xi_i|>|\xi_j|)  \right) \right\}.\EEQ
This generalizes the spaces ${\cal P}^{\Id} Meas(\R^n)$ defined above for trunk trees.
Assume one finds a way to define
regularized tree-index skeleton integrals $\phi^t_{\mu}(\mathbb{T})$ for every
$\mu\in {\cal P}^{\mathbb{T}} Meas(\R^n)$, such that 
\begin{itemize}
\item[(i)]
$\phi^t_{\mu_{(\Gamma,\ell)}}(\tun)=\SkI^t_{\mu_{(\Gamma,\ell)}}(\tun)=\frac{1}{\sqrt{2\pi}}
\int_{\R} {\mathcal F}\Gamma_{\ell(1)}(\xi) e^{\II t\xi} d\xi$ gives back the original path 
$\Gamma$ for the tree with one single vertex, and
\item[(ii)] if $\mathbb{T}\in {\bf H}_{ho}(n),\mathbb{T}'\in {\bf H}_{ho}(n')$,
and $\mu\in {\cal P}^{\mathbb{T}} Meas(\R^n)$, resp. $\mu'\in {\cal  P}^{\mathbb{T}'} Meas(\R^{n'})$, then 
\BEQ \phi^t_{\mu}(\mathbb{T})
\phi^t_{\mu'}(\mathbb{T}')=\phi^t_{\mu\otimes\mu'}(\mathbb{T}.\mathbb{T}'),\EEQ
which is an extension of the multiplicative property defining a character of $\H^d_{ho}$.
\end{itemize}

\medskip

Then one is tempted to think that, interpreting $\phi^t_{\mu^{\sigma}_{(\Gamma,\ell)}}(\mathbb{T})$
as coming from a  rough path $J^t_{\mu_{(\Gamma,\ell)}}$ over $\Gamma$ by the above defined measure-splitting
procedure, the following sequence of postulated equalities, where $\mathbb{T}_n\in {\bf H}_{ho}$ is the trunk tree with $n$ vertices,
\BEA J^t_{\mu_{(\Gamma,\ell)}}(\mathbb{T}_n)&=& \sum_{\sigma\in\Sigma_n} J^t_{\mu^{\sigma}_{(\Gamma,\ell)}\circ\sigma} (\mathbb{T}_n) \quad
{\mathrm{by\ linearity}} \nonumber\\
&=& \sum_{\sigma\in\Sigma_n} J^t_{\mu^{\sigma}_{(\Gamma,\ell)}}(\mathbb{T}^{\sigma}) \quad
{\mathrm{by\ the \ results\ of \ subsection\ 4.2}}\nonumber\\
&=&  \sum_{\sigma\in\Sigma_n} \phi^t_{\mu^{\sigma}_{(\Gamma,\ell)}}(\mathbb{T}^{\sigma}) 
\EEA
define a character of ${\bf Sh}^d$, and thus allow (leaving aside the regularity
properties which must be checked independently) to define a rough path over $\Gamma$.
This is the content of the following Lemma, which is actually, as we shall see, equivalent
to stating that $\Theta$ is a Hopf algebra morphism:

\begin{defi} \label{def:barchi-char}

\begin{itemize}

\item[(i)] Let $\phi^t_{\mathbb{T}}: {\cal P}^{\mathbb{T}}Meas(\R^n)\to\R, \mu\mapsto \phi^t_{\mathbb{T}}(\mu)$, also written $\phi^t_{\mu}(\mathbb{T})$ 
$(t\in\R, \mathbb{T}\in {\bf H}_{ho}(n))$  be a family of linear forms such that
\begin{itemize}
\item[(a)]
$\phi^t_{\tun}(\mu_{(\Gamma,i)})-\phi^s_{\tun}(\mu_{(\Gamma,i)})=\Gamma_t(i)-\Gamma_s(i)$;
\item[(b)] 
 if $(\mathbb{T}_i,\mu_i)\in
\H_{ho}(n_i)\times {\cal P}^{\mathbb{T}_i} Meas(\R^{n_i})$, $i=1,2$, the following 
{\em ${\bf H}_{ho}$-multiplicative property} holds,
\BEQ \phi^t_{\mu_1}(\mathbb{T}_1)  \phi^t_{\mu_2}(\mathbb{T}_2)=\phi^t_{\mu_1\otimes\mu_2}(\mathbb{T}_1.\mathbb{T}_2);
\label{eq:T1wedgeT2}\EEQ
\item[(c)] $\phi^t$ is  invariant under forest-ordering preserving symmetries, that is to say:
\BEQ \phi^t_\mu(\mathbb{F})=\phi^t_{\mu \circ \sigma}(\sigma^{-1}.\mathbb{F})\mbox{ if } \sigma \in S_{\mathbb{F}}.\label{E28} \EEQ
\end{itemize}

\item[(ii)] Let, for $\Gamma=(\Gamma(1),\ldots,\Gamma(d))$,
 ${\chi}_{\Gamma}^t:{\bf Sh}^d\to\R$ be the linear form on ${\bf Sh}^d$ defined by
\BEQ {\chi}_{\Gamma}^t (\mathbb{T}_n,\ell):=\sum_{\sigma\in \Sigma_n} \phi^t_{\mu_{(\Gamma,\ell)}^{\sigma}}(\mathbb{T}^{\sigma}).
\label{E29}\EEQ
\end{itemize}
\end{defi}

\begin{lemma} \label{lem:barchi-char}
\begin{enumerate}

\item For every path $\Gamma$ such that $\chi_{\Gamma}^t$ is well-defined,
 ${\chi}_{\Gamma}^t$ is a character of ${\bf Sh}^d$.

Consequently, the following formula  for $\mathbb{T}_n\in {\bf Sh}^d$,  $n\ge 1$, with $n$ vertices and decoration $\ell$,
  \BEQ (J')^{ts}_{\Gamma}(\ell(1)\ldots\ell(n)):=
   {\chi}^t_{\Gamma} \ast ({\chi}^s_{\Gamma}\circ S)
(\mathbb{T}_n) \EEQ
defines a rough path over $\Gamma$.

\item  $(J')^{ts}_{\Gamma}(\ell(1)\ldots \ell(n))=J^{ts}_{\Gamma}(\ell(1)\ldots \ell(n))$ where
\BEQ  J^{ts}_{\Gamma}(\mathbb{T}_n):=\sum_{\sigma\in\Sigma_n} \left(
\phi^t \ast (\phi^s\circ \bar{S})\right)_{\mu^{\sigma}_{(\Gamma,\ell)}}(\mathbb{T}^{\sigma}). \EEQ

The convolution $\phi^t \ast (\phi^s\circ\bar{S})$ in the last formula is defined
by reference to the ${\bf H}_{ho}$-coproduct, namely, one sets 
\BEQ (\phi^t\ast(\phi^s\circ\bar{S}))_{\nu}(\mathbb{T})=\sum_{\vec{v}\models V(\mathbb{T})} \phi^t_{
\otimes_{v\in V(\Roo_{\vec{v}}\mathbb{T})} \nu_v} (\Roo_{\vec{v}}\mathbb{T})  \phi^s_{
\otimes_{v\in V(\Lea_{\vec{v}}\mathbb{T})} \nu_v} (\bar{S}(\Lea_{\vec{v}}\mathbb{T})) \EEQ
for a tensor measure $\nu=\nu_1\otimes\ldots\otimes\nu_n$, and by multilinear extension

\BEA &&  \left(\phi^t\ast(\phi^s\circ\bar{S})\right)_{\nu}(\mathbb{T})=(2\pi)^{-n/2}
\int {\cal F}\nu(\xi_1,\ldots,\xi_n)d\xi_1\ldots d\xi_n \ \cdot\nonumber\\
&& \cdot 
\sum_{\vec{v}\models V(\mathbb{T})} \phi^t_{\otimes_{v\in V(Roo_{\vec{v}}(\mathbb{T}))} e^{\II  x_v\xi_v}
dx_v}(\Roo_{\vec{v}}\mathbb{T})   \phi^s_{\otimes_{v\in V(Lea_{\vec{v}}(\mathbb{T}))} e^{\II  x_v\xi_v}
dx_v}(\bar{S}(\Lea_{\vec{v}}\mathbb{T})) ,\quad \mathbb{T}\in {\cal F}_{ho}(n).  \nonumber\\ 
\label{convol} \EEA 

for an arbitrary measure $\nu\in Meas(\R^n)$.  
\end{enumerate}
\end{lemma}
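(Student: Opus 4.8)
The plan is to read the statement as a transport of structure through the graded Hopf algebra isomorphism $\Theta^d:\h_{ho}^d\to\FQSym^d$ of Proposition~\ref{p2}. Conditions (a)--(c) of Definition~\ref{def:barchi-char} say exactly that the assignment $(\mathbb{T},\mu)\mapsto\phi^t_\mu(\mathbb{T})$ is a measure-weighted character of $\h_{ho}^d$ that is covariant under forest-order-preserving symmetries, while the measure splitting $\mu=\sum_{\sigma}\mu^\sigma\circ\sigma^{-1}$ of Definition~\ref{defimeassplit} is the device that converts this into a genuine character of $\Sh^d$. I would therefore first establish the purely algebraic assertion of part~1 (that $\chi_\Gamma^t$ is a character), and then read off the rough-path claim and part~2 by standard convolution manipulations, so that the whole lemma becomes, as announced, a restatement of the fact that $\Theta^d$ is a Hopf algebra morphism.

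\emph{Character property and rough path.} Fix two words $w=(\mathbb{T}_k,\ell_1)$, $w'=(\mathbb{T}_l,\ell_2)$ and abbreviate $\mu_1=\mu_{(\Gamma,\ell_1)}$, $\mu_2=\mu_{(\Gamma,\ell_2)}$. Using (\ref{E29}) and the $\h_{ho}$-multiplicativity (b), extended bilinearly over the heap-ordered forests occurring in $\mathbb{T}^\sigma$ and $\mathbb{T}^\tau$,
$$\chi_\Gamma^t(w)\,\chi_\Gamma^t(w')=\sum_{\sigma\in\Sigma_k}\sum_{\tau\in\Sigma_l}\phi^t_{\mu_1^\sigma\otimes\mu_2^\tau}\big((\mathbb{T}^\sigma,\ell_1)(\mathbb{T}^\tau,\ell_2)\big).$$
I would then substitute the decorated product rule of Lemma~\ref{l3} for $(\mathbb{T}^\sigma,\ell_1)(\mathbb{T}^\tau,\ell_2)$ and the tensor identity (\ref{eq:musigma1-musigma2}) for $\mu_1^\sigma\otimes\mu_2^\tau$; the covariance (\ref{E28}) then moves the factor $\circ\,\epsilon^{-1}$ from the measure onto the forest order, after which Lemma~\ref{l8} collapses the $\epsilon$-action on $\mathbb{T}^\sigma\mathbb{T}^\tau$ (equivalently, part~1 of Lemma~\ref{l3bis} packages these steps at once). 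Re-indexing the resulting double shuffle sum over $(\zeta,\epsilon)$ by the group structure of the shuffles reproduces $\chi_\Gamma^t(w\shuffle w')$, so $\chi_\Gamma^t$ is a character of $\Sh^d$; the computation is line for line the proof that $\Theta^d$ is an algebra morphism, carried by $\phi^t$ and the measure splitting. Once this is known, $(J')^{ts}_\Gamma:=\chi_\Gamma^t*(\chi_\Gamma^s\circ S)$ is a convolution of a character with the convolution inverse $\chi_\Gamma^s\circ S$, hence a character of $\Sh^d$, which is axiom (iii); the Chen axiom (ii) is the telescoping $(J')^{tu}_\Gamma*(J')^{us}_\Gamma=\chi^t*(\chi^u\circ S)*\chi^u*(\chi^s\circ S)=(J')^{ts}_\Gamma$ using $(\chi^u\circ S)*\chi^u=\varepsilon$, and the increment normalisation $(J')^{ts}_\Gamma(i)=\chi^t(i)-\chi^s(i)=\Gamma_i(t)-\Gamma_i(s)$ follows from (a). The H\"older bound (i) is the only non-algebraic point and, as the surrounding discussion stresses, is taken to be part of the hypothesis that $\chi_\Gamma^t$ be well defined and verified separately.

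\emph{Part 2.} For the last identity I would unfold both convolutions and match them cut by cut. On the shuffle side the deconcatenation coproduct and the antipode $S(a_1\ldots a_m)=(-1)^m(a_m\ldots a_1)$ give $(J')^{ts}_\Gamma(\mathbb{T}_n)=\sum_{i=0}^n\chi^t(\ell(1)\ldots\ell(i))\,(\chi^s\circ S)(\ell(i+1)\ldots\ell(n))$; on the $\h_{ho}$ side the convolution is unfolded over the admissible cuts of $\mathbb{T}^\sigma$ using part~2 of Lemma~\ref{l3bis}, the measure being split accordingly and $\bar S$ acting on the $\Lea$ component. The two sums coincide because the commuting square makes $\theta^d$ a Hopf morphism, so $\theta^d\circ\bar S=S\circ\theta^d$, and because the definition (\ref{E29}) of $\chi_\Gamma^t$ is precisely what translates the deconcatenation coproduct of $\Sh^d$ into the admissible-cut coproduct of $\h_{ho}^d$ through the measure splitting.

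\emph{Main obstacle.} The genuine difficulty is not the re-indexing of the nested shuffle sums (routine but error-prone) but the interplay between the two permutation actions: the action on measures, through the Fourier frequency cone $|\xi_1|\le\cdots\le|\xi_n|$, and the action on forest orders, through $S_{\mathbb{F}}$. One must check at each step that the measure fed to $\phi^t$ actually lies in the admissible domain ${\cal P}^{\mathbb{F}}Meas(\R^n)$ of the heap-ordered forest $\mathbb{F}$ it is paired with --- this is where the heap condition $i\twoheadrightarrow j\Rightarrow i>j$ is used, matching the support condition $|\xi_i|\ge|\xi_j|$ --- and that the covariance (\ref{E28}) may indeed be applied, the boundary frequency sets $|\xi_i|=|\xi_j|$ being of measure zero and hence negligible.
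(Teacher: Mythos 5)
Your part 1 is essentially the paper's own proof: the chain consisting of the multiplicative property (b), the tensor identity (\ref{eq:musigma1-musigma2}), the invariance (\ref{E28}), Lemma \ref{l8}, and a re-indexing is exactly the paper's computation (\ref{E35}), and matching it against the expansion (\ref{E34}) of $\chi^t_{\Gamma}(w\shuffle w')$ (which also requires eq.~(\ref{E22}), left implicit in your sketch) yields the character property; the rough-path consequence is then handled identically in both texts (convolution of characters, telescoping for Chen, normalisation (a) for the increments).

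Part 2, however, contains a genuine gap. You assert that the two unfolded convolutions ``coincide because the commuting square makes $\theta^d$ a Hopf morphism'' and because (\ref{E29}) translates the deconcatenation coproduct into the admissible-cut coproduct ``through the measure splitting''. But that translation is precisely what has to be proved, and the Hopf-morphism property of $\theta^d$ does not deliver it: the convolution on the $\H_{ho}$ side is \emph{measure-indexed}, and the measures $\mu^{\sigma}={\cal P}^{\Id}(\mu\circ\sigma)$ are \emph{not} tensor measures (the Fourier projection destroys the product structure), so the phrase ``the measure being split accordingly'' across an admissible cut has no direct meaning; it is only defined through the multilinear Fourier extension (\ref{convol}), and unfolding that extension is exactly the hard point. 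The paper resolves this by a reduction you never make: by multilinearity it suffices to take ${\cal F}\mu=\delta(\xi-\xi^0)$ a Dirac mass with generic frequencies, $|\xi^0_{\sigma_0(1)}|<\ldots<|\xi^0_{\sigma_0(n)}|$. Then $\mu^{\sigma}=0$ unless $\sigma=\sigma_0$, so the sum defining $J^{ts}_{\Gamma}(\mathbb{T}_n)$ collapses to a single term; every measure in sight is again a Dirac, hence tensor, measure, so the tensor-measure form of the convolution applies; and the \emph{term-by-term} identity $\mu^{\sigma_0}\circ\eps^{-1}=\mu_1^{\sigma_1}\otimes\mu_2^{\sigma_2}$ holds (for a general $\mu$, eq.~(\ref{eq:musigma1-musigma2}) gives only a \emph{sum} over shuffles, not a single-term identity). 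With these facts, the coproduct formula (\ref{eq:TT2bis}) applied to $\mathcal{T}^{\sigma_0}$ together with the invariance (c) finishes the computation, the vanishing terms $\mu_1^{\sigma_1}$, $\mu_2^{\sigma_2}$ being added back to recognise $(\chi^t\ast(\chi^s\circ\bar{S}))_{\mu}(\mathbb{T}_n)$. Without this reduction (or an equivalent, more laborious argument that sums eq.~(\ref{eq:musigma1-musigma2}) over shuffles inside the Fourier integral of (\ref{convol})), your ``cut by cut'' matching cannot be carried out. Your closing paragraph shows you sense where the difficulty lives --- genericity of the frequency moduli and negligibility of the boundary sets are indeed what justifies the Dirac reduction --- but the mechanism itself is absent from your argument.
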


{\bf Proof.} 
\begin{enumerate}
\item
  Let $\mathbb{T}_{n_i}\in {\bf H}_{ho}$ be the heap-ordered trunk trees  
with $n_i$ vertices, $i=1,2$; define $n:=n_1+n_2$.  All right-, resp. left shuffles
 $\eps,\zeta$ below are
intended to be shuffles of $(1,\ldots,n_1),(n_1+1,\ldots,n_2)$.   Then, with $\ell=\ell_1\otimes \ell_2$ and writing $\mu$ instead of $\mu_{(\Gamma,\ell)}$:

\BEA \chi^t_{\Gamma}(({\mathbb{T}}_{n_1},\ell_1)\shuffle ({\mathbb{T}}_{n_2},\ell_2)) 
&=& \sum_{\zeta} \chi^t_{\Gamma}(({\mathbb{T}}_n,\ell\circ\zeta))\mbox{ by (\ref{E29})}\nonumber\\
&=& \sum_{\zeta,\sigma} \phi^t_{\mu^{\sigma}_{(\Gamma,\ell\circ\zeta)}}(\mathbb{T}^{\sigma})\nonumber\\
&=&\sum_{\zeta,\sigma} \phi^t_{(\mu\circ\zeta)^{\sigma}}(\mathbb{T}^{\sigma}) \nonumber\\
&=& \sum_{\zeta,\sigma} \phi^t_{\mu^{\zeta\circ\sigma}}(\mathbb{T}^{\sigma}) \mbox{ by (\ref{E22})} \nonumber\\
&=& \sum_{\zeta,\sigma} \phi^t_{\mu^{\sigma}}(\mathbb{T}^{\zeta^{-1}\circ\sigma}). \label{E34}  \EEA

On the other hand, denoting $\mu_i=\mu_{(\Gamma,\ell_i)}$:
\BEA && \chi^t_{\Gamma}(({\mathbb{T}}_{n_1},\ell_1))\chi^t_{\Gamma}(({\mathbb{T}}_{n_2},\ell_2)) \nonumber\\
&&=\sum_{\sigma_1,\sigma_2} \phi^t_{\mu_1^{\sigma_1}}(\mathbb{T}^{\sigma_1})\phi^t_{\mu_2^{\sigma_2}}(\mathbb{T}^{\sigma_2}) \nonumber\\
&&=\sum_{\sigma_1,\sigma_2,\eps} \phi^t_{\mu^{(\sigma_1\otimes\sigma_2)\circ\eps}\circ
\eps^{-1}}(\mathbb{T}^{\sigma_1}.\mathbb{T}^{\sigma_2}) \mbox{ by (\ref{eq:musigma1-musigma2})} \nonumber\\
&&= \sum_{\sigma_1,\sigma_2,\eps} \phi^t_{\mu^{(\sigma_1\otimes\sigma_2)\circ\eps}}
(\eps^{-1}(\mathbb{T}^{\sigma_1}.\mathbb{T}^{\sigma_2})) \nonumber\\
&&= \sum_{\sigma_1,\sigma_2,\eps,\zeta}  \phi^t_{\mu^{(\sigma_1\otimes\sigma_2)\circ\eps}}
\left(\mathbb{T}^{\zeta^{-1}\circ(\sigma_1\otimes\sigma_2)\circ\eps}\right) \mbox{ by lemma \ref{l8}} \nonumber\\
&&= \sum_{\zeta,\sigma} \phi^t_{\mu^{\sigma}}(\mathbb{T}^{\zeta^{-1}\circ\sigma}). \label{E35}  \EEA

\item 

 Let us check that, for a tensor measure $\mu\in Meas(\R^n)$,
$$\sum_{\sigma\in\Sigma_n} (\phi^t \ast (\phi^s\circ \bar{S}))_{\mu^{\sigma}}(\mathbb{T}^{\sigma})=
(\chi^t \ast (\chi^s\circ \bar{S}))_\mu({\mathcal{T}}_n).$$

Assume ${\cal F}\mu(\xi_1,\ldots,\xi_n)=\delta(\xi-\xi^0)$ is a Dirac distribution, where
$|{\xi^0}_{\sigma_0(1)}|<\ldots<|{\xi^0}_{\sigma_0(n)}|$ for some $\sigma_0\in\Sigma_n$.
By construction, $\mu^{\sigma}=0$ unless $\sigma=\sigma_0$, so 
\BEQ \sum_{\sigma\in\Sigma_n} (\phi^t \ast (\phi^s\circ \bar{S}))_{\mu^{\sigma}}(\mathbb{T}^{\sigma})=
(\phi^t \ast (\phi^s\circ \bar{S}))_{\mu^{\sigma_0}}(\mathbb{T}^{\sigma_0}). \EEQ
We now apply the coproduct formula (\ref{eq:TT2bis}) to $\mathcal{T}^{\sigma_0}$, with $\mathcal{T}=(\mathbb{T}_n,\ell)$, such that $\ell(i)=i$
for all $i$. We obtain:
\BEQ \Delta(\mathcal{T}^{\sigma_0})=\sum_{k=0}^n \sum_{\sigma_0=(\sigma_1\otimes\sigma_2)\circ\eps}
\epsilon^{-1}.(\left(\mathbb{T}^{\sigma_1},\ell_1)\otimes(\mathbb{T}^{\sigma_2},\ell_2)\right).\EEQ

For fixed $k \in \{0,\ldots,n\}$, let us write $\mu=\mu_1 \otimes \mu_2$, with $\mu_1\in Meas(\R^k)$ and $\mu_2 \in Meas(\R^{n-k})$.
The shuffle $\eps$ may be made to act on $\mu^{\sigma_0}$ instead of $(\mathbb{T}^{\sigma_1},\ell_1)\otimes(\mathbb{T}^{\sigma_2},\ell_2)$
because of the invariance condition, resulting in the product measure $\mu^{\sigma_0}\circ\eps^{-1}=\mu_1^{\sigma_1}\otimes \mu_2^{\sigma_2}$,
see eq. (\ref{eq:musigma1-musigma2}), so the convolution in (\ref{convol}) writes simply 
\BEA && \sum_{k=0}^n \sum_{\sigma_0=(\sigma_1\otimes\sigma_2)\circ\eps}
\phi^t_{\mu_1^{\sigma_1}}(\mathbb{T}^{\sigma_1}) (\phi^s_{\mu_2^{\sigma_2}}\circ\bar{S})(\mathbb{T}^{\sigma_2}) \nonumber\\
 && = \sum_{k=0}^n \sum_{\sigma_1,\sigma_2} \phi^t_{\mu_1^{\sigma_1}}(\mathbb{T}^{\sigma_1})
(\phi^s_{\mu_2^{\sigma_2}}\circ\bar{S})(\mathbb{T}^{\sigma_2})  \nonumber\\
&& = (\chi^t\ast (\chi^s\circ \bar{S}))_{\mu}({\mathbb{T}}_n), \EEA 
where $\mathbb{T}_n$ is the trunk tree with $n$ vertices.

\end{enumerate} \hfill \eop

Conversely, assume one has some  path-dependent shuffle character
 $\Gamma\rightsquigarrow \chi^t_{\Gamma}(\mathcal{T})$ for every $\Gamma$, which (when extended to a measure-indexed character) is linear; $\chi^t_{\Gamma}\ast(\chi^s_{\Gamma}\circ S)$ is  then a rough path over $\Gamma$. 
One may define  $\phi^t(\mathbb{T}^{\sigma})$, $\sigma\in\Sigma_n$ from eq. (\ref{E29}), and then $\phi^t(\mathbb{T})$ for an arbitrary tree since permutation graphs generate ${\bf H}_{ho}$. Eq. (\ref{E28}) is then
trivially satisfied, and we claim that  eq. (\ref{eq:T1wedgeT2}) also holds. Namely, circulating through eq. (\ref{E34}) and
(\ref{E35}), and extending $\chi^t$ to a projected measure $\mu=(\mu_1^{\sigma_1}\circ \sigma_1^{-1})\otimes
(\mu_2^{\sigma_2}\circ \sigma_2^{-1})$, one gets
\BEA &&  \phi^t_{\mu_1^{\sigma_1}}(\mathbb{T}^{\sigma_1})\phi^t_{\mu_2^{\sigma_2}}(\mathbb{T}^{\sigma_2})=\chi^t_{\mu_1^{\sigma_1}}(({\mathbb{T}}_{n_1},\ell_1))\chi_{\mu_2^{\sigma_2}}(({\mathbb{T}}_{n_2},\ell_2))=
\chi^t_{\mu_1^{\sigma_1}\otimes\mu_2^{\sigma_2}}(({\mathbb{T}}_{n_1},\ell_1)\shuffle ({\mathbb{T}}_{n_2},\ell_2))  \nonumber\\
&&=\sum_{\zeta,\sigma} \phi^t_{\mu^{\sigma}}(\mathbb{T}^{\zeta^{-1}\circ\sigma})= \sum_{\sigma_1,\sigma_2,\eps} \phi^t_{\mu^{(\sigma_1\otimes\sigma_2)\circ\eps}}
(\eps^{-1}(\mathbb{T}^{\sigma_1}.\mathbb{T}^{\sigma_2})) \nonumber\\
&&= \phi^t_{\mu_1^{\sigma_1}\otimes\mu_2^{\sigma_2}}(\mathbb{T}^{\sigma_1}.\mathbb{T}^{\sigma_2}).\EEA Hence all axioms of Definition \ref{def:barchi-char}
hold.

In this sense the Fourier normal ordering algorithm for constructing rough paths yields {\em all} possible formal
rough paths.

\bigskip

It is easy to realize what one has gained by this approach. One has essentially reduced
the problem of {\em regularizing characters of the shuffle algebra ${\bf Sh}^d$}
 to that of {\em regularizing
 characters of the Connes-Kreimer algebra ${\bf H}^d$}. Now a character of ${\bf H}^d$ is
simply the linear and multiplicative extension of an {\em arbitrary} function on the
generating set ${\bf T}$. The only (obvious) constraint is that $\phi^t_{d\Gamma_i}(\tun)=\SkI^t_{\Gamma}(\tdun{$i$})$ up to an integration constant
 for trees with one vertex, so that
the above construction yields a rough path {\em over $\Gamma$}. Regularization schemes 
respecting the multiplicative structure of $\bf H$ abund in the literature under the
name of {\em renormalization schemes}, from the most abstract ones involving the 
Rota-Baxter equation, to the renormalization schemes in quantum field theory obtained
by  recursively defined counterterms, constructed e.g. by
letting external momenta of Feynman diagrams go to zero. Indeed, such constructions
allow to construct rough paths \cite{Unt-ren}; the first rough path construction \cite{Unterberger,Unterberger-fbm} by Fourier
normal ordering (a domain regularization obtained by simply cutting conical regions surrounding the dangerous Fourier directions
where the denominator in (\ref{E18}) is small) is of a different type though. 
On the other hand, it seems difficult to try and find directly counterterms that are multiplicative
{\em with respect to the shuffle product}.


\end{document}